\theoremstyle{plain}
\newtheorem{thm}{Theorem}[section]
\newtheorem{prop}[thm]{Proposition}
\newtheorem{cor}[thm]{Corollary}
\theoremstyle{definition}
\newtheorem{defn}[thm]{Definition}
\newtheorem{rem}[thm]{Remark}
\newtheorem{ex}[thm]{Example}
\numberwithin{equation}{section}
\newcommand{\Q}{\mathbb{Q}}
\newcommand{\bk}{\mathbf{k}}
\newcommand{\bl}{\mathbf{l}}
\newcommand{\frH}{\mathfrak{H}}
\newcommand{\wt}[1]{\lvert{#1}\rvert}
\newcommand{\bast}{\mathbin{\bar{*}}}
\newcommand{\rev}[1]{\overleftarrow{#1}}
\DeclareMathOperator{\re}{Re}
\title{A note on Kawashima functions}
\author{Shuji Yamamoto}
\address{Keio Institute of Pure and Applied Sciences (KiPAS), 
Graduate School of Science and Technology, Keio University, 
3-14-1 Hiyoshi, Kohoku-ku, Yokohama, 223-8522, Japan}
\email{yamashu@math.keio.ac.jp}
\keywords{Kawashima functions; Polygamma functions; Multiple zeta values}
\subjclass[2010]{11M32, 33B15}
\begin{document}
\begin{abstract}
This note is a survey of results on the function $F_\bk(z)$ 
introduced by G.~Kawashima, and its applications 
to the study of multiple zeta values. 
We stress the viewpoint that the Kawashima function is a generalization 
of the digamma function $\psi(z)$, and explain how various formulas 
for $\psi(z)$ are generalized. 
We also discuss briefly the relationship of the results on 
the Kawashima functions with the recent work on Kawashima's MZV relation 
by M.~Kaneko and the author. 
\end{abstract}

\maketitle 

\section{Introduction}
In \cite{K1}, G.~Kawashima introduced a family of special functions 
$F_\bk(z)$, where $\bk=(k_1,\ldots,k_r)$ is a sequence of positive integers, 
and proved some remarkable properties of them. 
As an application, he obtained a large class of algebraic relations 
among the multiple zeta values (MZVs), called \emph{Kawashima's relation}. 
Kawashima's relation can be used to derive some of other classes of relations 
(duality, Ohno's relation, quasi-derivation relation and 
cyclic sum formula; see \cite{K1,T,TW}), 
and is expected to imply all algebraic relations. 

In this note, we survey results on these functions $F_\bk(z)$, 
which we call the \emph{Kawashima functions}, 
and their connections with MZVs. 
We stress the viewpoint that \emph{the Kawashima function is 
a multiple version of the digamma function}. 
Recall that the digamma function $\psi(z)$ is defined as 
the logarithmic derivative of the gamma function:  
$\psi(z)=\frac{d}{dz}\log\Gamma(z)$. 
This is one of the well-studied functions in classical analysis. 
Here we list some formulas on $\psi(z)$ 
($\gamma$ denotes the Euler-Mascheroni constant): 
\begin{itemize}
\item Newton series: 
\begin{equation}\label{eq:psi Newton}
\psi(z+1)=-\gamma+\sum_{n=1}^\infty\frac{(-1)^{n-1}}{n}\binom{z}{n}. 
\end{equation}
\item Interpolation property: For an integer $N\geq 0$, 
\begin{equation}\label{eq:psi interpolation}
\psi(N+1)=-\gamma+\sum_{n=1}^N \frac{1}{n}. 
\end{equation}
\item Integral representation: 
\begin{equation}\label{eq:psi integral}
\psi(z+1)=-\gamma+\int_0^1\frac{1-t^z}{1-t}dt. 
\end{equation}
\item Partial fraction series: 
\begin{equation}\label{eq:psi fraction}
\psi(z+1)=-\gamma+\sum_{n=1}^\infty\biggl(\frac{1}{n}-\frac{1}{n+z}\biggr). 
\end{equation}
\item Taylor series: 
\begin{equation}\label{eq:psi Taylor}
\psi(z+1)=-\gamma+\sum_{m=1}^\infty(-1)^{m-1}\zeta(m+1)z^m. 
\end{equation}
\end{itemize}

In \S\ref{subsec:Newton}, we define the Kawashima function $F_\bk(z)$ 
by a Newton series generalizing \eqref{eq:psi Newton}. 
Then we explain how the formulas \eqref{eq:psi interpolation}, 
\eqref{eq:psi integral} and \eqref{eq:psi fraction} are 
extended to $F_\bk(z)$, 
in \S\ref{subsec:interpolation}, \S\ref{subsec:int} and 
\S\ref{subsec:fraction} respectively. 

The Taylor expansion of $F_\bk(z)$ at $z=0$, which generalizes 
\eqref{eq:psi Taylor}, is descirbed in \S\ref{subsec:Taylor}. 
In fact, there are three methods to compute the Taylor coefficients, 
each of which expresses the coefficients in terms of MZVs 
 (Proposition \ref{prop:F_k Taylor1}, Proposition \ref{prop:F_k Taylor2} 
and Corollary \ref{cor:F_k Taylor3}). 
In \S\ref{subsec:harmonic}, we treat another important property 
of Kawashima functions, the \emph{harmonic relation} 
(Theorem \ref{thm:F_k harmonic}). 
Then by combining it with the Taylor series \eqref{eq:F_k Taylor1}, 
we deduce Kawashima's algebraic relation for MZVs 
(Corollary \ref{cor:KawashimaRel}). 

At the Lyon Conference, the author talked on a new proof of 
Kawashima's MZV relation based on the double shuffle relation and 
the regularization theorem, which is a part of the work 
with M.~Kaneko \cite{KY}. 
In \S\ref{subsec:Rem}, we briefly discuss the relationship 
between this proof and the results on Kawashima functions 
presented in \S\ref{subsec:Taylor} and \S\ref{subsec:harmonic}. 

\smallskip 

Though this is basically an expository article on known results 
(largely due to Kawashima), it includes some results 
which appear in print for the first time; 
Proposition \ref{prop:F_k int}, Proposition \ref{prop:F_k inductive} 
and Corollary \ref{cor:F_k diff}. 
On the other hand, we should also note that we leave out some important works 
related with Kawashima functions and Kawashima's MZV relation; 
particularly, their $q$-analogue studied by Takeyama \cite{Take}, 
and the generalization of Kawashima's relation 
to `interpolated' MZVs by Tanaka and Wakabayashi \cite{TW2}. 
For details, we refer the reader to their original articles. 

\section{Definition and Formulas of the Kawashima function}
In this section, we define the Kawashima function by 
generalizing the Newton series in \eqref{eq:psi Newton}, 
and present generalizations of \eqref{eq:psi interpolation}, 
\eqref{eq:psi integral} and \eqref{eq:psi fraction}. 

\subsection{Multiple harmonic sums}
Let $\bk=(k_1,\ldots,k_r)$ be an \emph{index}, i.e., 
a sequence of positive integers of finite length $r$. 
We call $\wt{\bk}:=k_1+\cdots+k_r$ the \emph{weight} of $\bk$. 
We regard the sequence of length $0$ as an index, 
the \emph{empty index} denoted by $\varnothing$, 
though we mainly consider nonempty indices. 

For a nonempty index $\bk=(k_1,\ldots,k_r)$ and an integer $N\geq 0$, we put 
\begin{align*}
s(\bk,N)&=\sum_{0<m_1<\cdots<m_{r-1}<m_r=N}
\frac{1}{m_1^{k_1}\cdots m_r^{k_r}}, \\
s^\star(\bk,N)&=\sum_{0<m_1\leq \cdots\leq m_{r-1}\leq m_r=N}
\frac{1}{m_1^{k_1}\cdots m_r^{k_r}}, \\
S(\bk,N)&=\sum_{0<m_1<\cdots<m_{r-1}<m_r\leq N}
\frac{1}{m_1^{k_1}\cdots m_r^{k_r}}=\sum_{n=1}^Ns(\bk,n), \\
S^\star(\bk,N)&=\sum_{0<m_1\leq \cdots\leq m_{r-1}\leq m_r\leq N}
\frac{1}{m_1^{k_1}\cdots m_r^{k_r}}=\sum_{n=1}^Ns^\star(\bk,n). 
\end{align*}

In \cite{Y}, integral representations of $s^\star(\bk,N)$ 
and $S^\star(\bk,N)$ are given: 
\begin{thm}\label{thm:S_k int}
For a nonempty index $\bk=(k_1,\ldots,k_r)$, put $k=\wt{\bk}$ and 
\begin{align*}
A(\bk)&=\{k_1,k_1+k_2,\ldots,k_1+\cdots+k_{r-1}\},\\
\Delta(\bk)&=\Biggl\{(t_1,\ldots,t_k)\in (0,1)^k\Biggm|
\begin{array}{l}
t_j>t_{j+1} \text{ if $j\notin A(\bk)$},\\ 
t_j<t_{j+1} \text{ if $j\in A(\bk)$}
\end{array}\Biggr\}. 
\end{align*}
Then we have 
\begin{align}
\label{eq:s_k int} 
s^\star(\bk,N)&=\int_{\Delta(\bk)}\omega_{\delta(1)}(t_1)
\cdots\omega_{\delta(k-1)}(t_{k-1})\,t_k^{N-1}dt_k, \\
\label{eq:S_k int} 
S^\star(\bk,N)&=\int_{\Delta(\bk)}\omega_{\delta(1)}(t_1)
\cdots\omega_{\delta(k-1)}(t_{k-1})\frac{1-t_k^N}{1-t_k}dt_k, 
\end{align}
where $\omega_0(t)=\frac{dt}{t}$, $\omega_1(t)=\frac{dt}{1-t}$ and 
\[\delta(j)=\begin{cases}0 & \text{if $j\notin A(\bk)$}, \\
1 & \text{if $j\in A(\bk)$}. \end{cases}\]
\end{thm}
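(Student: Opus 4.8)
The plan is to prove the two formulas simultaneously by induction on the depth $r$, using the two elementary recursions
\[
s^\star(\bk,N)=\frac{1}{N^{k_r}}S^\star(\bk',N),\qquad
S^\star(\bk,N)=\sum_{n=1}^N s^\star(\bk,n),
\]
where $\bk'=(k_1,\ldots,k_{r-1})$ and $k'=\wt{\bk'}=k-k_r$. The second recursion immediately reduces \eqref{eq:S_k int} to \eqref{eq:s_k int} for the \emph{same} index: summing \eqref{eq:s_k int} over $n=1,\ldots,N$ and using the finite geometric identity $\sum_{n=1}^N t_k^{n-1}=\frac{1-t_k^N}{1-t_k}$ (a finite sum, so interchange with the integral is trivial) turns $t_k^{N-1}$ into $\frac{1-t_k^N}{1-t_k}$. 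Hence it suffices to establish \eqref{eq:s_k int}, and I may freely invoke \eqref{eq:S_k int} for any index of strictly smaller depth.

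For the base case $r=1$, $\bk=(k)$, one has $A(\bk)=\varnothing$, so every $\delta(j)=0$ and $\Delta(\bk)$ is the simplex $1>t_1>\cdots>t_k>0$. Integrating from the innermost variable, $\int_0^{t_{k-1}}t_k^{N-1}\,dt_k=t_{k-1}^N/N$, and each subsequent $\omega_0=dt/t$ reproduces the same shape while dividing by $N$; the integral therefore collapses to $1/N^k=s^\star((k),N)$.

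For the inductive step I would split the integration in \eqref{eq:s_k int} for $\bk$ into the ``last block'' $t_{k'+1},\ldots,t_k$ and the rest. Since $A(\bk)=A(\bk')\cup\{k'\}$, the forms at positions $1,\ldots,k'-1$ are exactly those of $\bk'$, position $k'$ carries $\omega_1$, and positions $k'+1,\ldots,k-1$ carry $\omega_0$. Because $\Delta(\bk)$ constrains only consecutive coordinates, the block is linked to the remaining variables solely through the single relation $t_{k'}<t_{k'+1}$; thus for fixed $t_{k'}$ the block integrates independently. Here $t_{k'+1}$ is a local maximum whose only upper bound is $1$, while $t_{k'+2}>\cdots>t_k$ descend below it, and one computes that the block contributes $\frac{1-t_{k'}^N}{N^{k_r}}$ (the innermost power integral followed by the subsequent $\omega_0$'s gives $t_{k'+1}^N/N^{k_r-1}$, after which $\int_{t_{k'}}^1 t_{k'+1}^{N-1}\,dt_{k'+1}=\frac{1-t_{k'}^N}{N}$). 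Multiplying by the leftover form $\omega_1(t_{k'})=\frac{dt_{k'}}{1-t_{k'}}$ yields $\frac{1}{N^{k_r}}\frac{1-t_{k'}^N}{1-t_{k'}}\,dt_{k'}$, and the residual domain on $t_1,\ldots,t_{k'}$ is precisely $\Delta(\bk')$. Thus the integral equals $\frac{1}{N^{k_r}}\int_{\Delta(\bk')}\omega_{\delta(1)}(t_1)\cdots\omega_{\delta(k'-1)}(t_{k'-1})\frac{1-t_{k'}^N}{1-t_{k'}}\,dt_{k'}$, which by \eqref{eq:S_k int} for $\bk'$ equals $\frac{1}{N^{k_r}}S^\star(\bk',N)=s^\star(\bk,N)$, closing the induction.

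The main obstacle is the geometry of the zigzag domain $\Delta(\bk)$ in the inductive step: one must verify that the last block genuinely decouples through the lone relation $t_{k'}<t_{k'+1}$, and that the local-maximum variable $t_{k'+1}$ runs all the way up to $1$. It is exactly this upper limit that produces the factor $1-t_{k'}^N$, hence the ``$\le$'' inequality joining the final two summation indices in the star-sum. I would also check the degenerate case $k_r=1$, where the block consists of $t_k$ alone and the computation reduces to $\int_{t_{k'}}^1 t_k^{N-1}\,dt_k=\frac{1-t_{k'}^N}{N}$, so the uniform formula persists; non-circularity is automatic since \eqref{eq:S_k int} for $\bk'$ follows from \eqref{eq:s_k int} for $\bk'$ by the first paragraph.
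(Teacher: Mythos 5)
Your proof is correct, but it takes a genuinely different route from the paper, which does not actually prove \eqref{eq:s_k int} at all: it cites \cite[Theorem 1.2]{Y} (stated there with the index in reverse order) and then derives \eqref{eq:S_k int} exactly as you do in your first paragraph, by summing $\sum_{n=1}^N t_k^{n-1}=\frac{1-t_k^N}{1-t_k}$ under the integral. Your contribution is the self-contained inductive argument for \eqref{eq:s_k int} itself: peeling off the last block of $k_r$ variables, which couples to the rest only through the single inequality $t_{k'}<t_{k'+1}$, evaluating it to $\frac{1-t_{k'}^N}{N^{k_r}}$, absorbing the leftover $\omega_1(t_{k'})$ into $\frac{1}{N^{k_r}}\frac{1-t_{k'}^N}{1-t_{k'}}\,dt_{k'}$, and invoking \eqref{eq:S_k int} for $\bk'$ (available non-circularly, since it follows from \eqref{eq:s_k int} at depth $r-1$ by your first-paragraph reduction) together with $s^\star(\bk,N)=N^{-k_r}S^\star(\bk',N)$. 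I checked the bookkeeping: $A(\bk)=A(\bk')\cup\{k'\}$, the $\delta$-pattern on positions $1,\ldots,k'-1$ and the residual domain $\Delta(\bk')$ are as you claim, the block evaluation is right (including the degenerate case $k_r=1$), and the base case $r=1$ collapses to $1/N^k$ as stated. Notably, your block computation is the same mechanism the paper itself uses later in the proof of Proposition \ref{prop:F_k inductive}, so your route shows the theorem is obtainable with the paper's own toolkit rather than the external reference: the citation buys brevity, while your induction buys a self-contained exposition. The one point worth a sentence in a final write-up is the justification of evaluating the multiple integral iteratively over the open zigzag domain, where the forms are singular at $0$ and $1$; since the integrand is nonnegative, Tonelli's theorem applies, and your step-by-step computation then simultaneously establishes absolute convergence.
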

\begin{proof}
The first formula \eqref{eq:s_k int} is \cite[Theorem 1.2]{Y}, 
stated in different symbols (in \cite{Y}, 
the inverse order is adopted for the index). 
The second \eqref{eq:S_k int} is an immediate consequence of the first, 
since $\sum_{n=1}^N t_k^{n-1}=\frac{1-t_k^N}{1-t_k}$. 
\end{proof}

As noted in \cite{Y}, the integral representations \eqref{eq:s_k int} 
and \eqref{eq:S_k int} imply the following identities, known as 
\emph{Hoffman's duality}: 
\begin{thm}[\cite{H,K1}]
Let $\bk^\vee$ be the \emph{Hoffman dual} of $\bk$, 
i.e., the index characterized by 
\[\wt{\bk}=\wt{\bk^\vee},\quad 
A(\bk)\amalg A(\bk^\vee)=\{1,2,\ldots,\wt{\bk}-1\}. \]
Then we have 
\begin{align}
\label{eq:s_k dual} 
s^\star(\bk,N)&=\sum_{n=1}^N(-1)^{n-1}s^\star(\bk^\vee,n)\binom{N-1}{n-1}, \\
\label{eq:S_k dual} 
S^\star(\bk,N)&=\sum_{n=1}^N(-1)^{n-1}s^\star(\bk^\vee,n)\binom{N}{n}. 
\end{align}
\end{thm}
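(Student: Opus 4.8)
The plan is to obtain both identities directly from the integral representations \eqref{eq:s_k int} and \eqref{eq:S_k int}, by applying the coordinatewise involution $t_i\mapsto 1-t_i$ to the integrals for $\bk$. The point is that the defining property $A(\bk)\amalg A(\bk^\vee)=\{1,\ldots,\wt{\bk}-1\}$ says exactly that $j\in A(\bk^\vee)$ if and only if $j\notin A(\bk)$; hence the weights attached to the two indices are everywhere complementary, $\delta^\vee(j)=1-\delta(j)$. Since $t\mapsto 1-t$ interchanges $\omega_0(t)=\frac{dt}{t}$ and $\omega_1(t)=\frac{dt}{1-t}$ as positive measures on $(0,1)$, one expects this substitution to transform the integrand for $\bk$ into the integrand for $\bk^\vee$.

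First I would verify that, writing $u_i=1-t_i$, the substitution carries the region $\Delta(\bk)$ bijectively onto $\Delta(\bk^\vee)$. Each order condition flips, since $t_j>t_{j+1}$ is equivalent to $u_j<u_{j+1}$; because $A(\bk)$ and $A(\bk^\vee)$ are complementary, the collection of flipped conditions is precisely the one defining $\Delta(\bk^\vee)$, and the distinguished last coordinate $t_k$ is sent to the last coordinate $u_k$. Simultaneously each $\omega_{\delta(j)}(t_j)$ with $j<\wt{\bk}$ becomes $\omega_{\delta^\vee(j)}(u_j)$. Here it is important to read the iterated integral as the integral of a product of positive measures against the positive density in the last variable, rather than as an oriented de Rham integral: in the measure picture the Jacobian $|{-1}|=1$ absorbs every sign, so no spurious factors $(-1)$ arise from the reflection.

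With the domain and the $\wt{\bk}-1$ forms accounted for, only the last factor remains, and it is here that the binomial coefficients and the alternating signs are produced. For \eqref{eq:s_k dual} the factor $t_k^{N-1}\,dt_k$ becomes $(1-u_k)^{N-1}\,du_k$, and inserting
\[
(1-u_k)^{N-1}=\sum_{n=1}^N(-1)^{n-1}\binom{N-1}{n-1}u_k^{n-1}
\]
and integrating term by term against the $\bk^\vee$-integrand reproduces $\sum_{n=1}^N(-1)^{n-1}s^\star(\bk^\vee,n)\binom{N-1}{n-1}$ by \eqref{eq:s_k int}. For \eqref{eq:S_k dual} one starts instead from \eqref{eq:S_k int}; the factor $\frac{1-t_k^N}{1-t_k}\,dt_k$ becomes $\frac{1-(1-u_k)^N}{u_k}\,du_k$, and the expansion
\[
\frac{1-(1-u_k)^N}{u_k}=\sum_{n=1}^N(-1)^{n-1}\binom{N}{n}u_k^{n-1}
\]
yields $\sum_{n=1}^N(-1)^{n-1}s^\star(\bk^\vee,n)\binom{N}{n}$ in the same way.

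The step I expect to require the most care is the combinatorial heart of the argument: checking that the reflected order conditions match those of $\Delta(\bk^\vee)$ and that the role of the final coordinate is preserved, all of which rests on the complementarity $\delta^\vee=1-\delta$. Once this matching is in place the remainder is the two elementary binomial expansions above, which are finite sums and so raise no convergence issues in the term-by-term integration.
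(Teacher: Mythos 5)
Your proposal is correct and follows essentially the same route as the paper: the substitution $t_i\mapsto 1-t_i$ in the integral representations \eqref{eq:s_k int} and \eqref{eq:S_k int}, which interchanges $\omega_0$ and $\omega_1$ and carries $\Delta(\bk)$ onto $\Delta(\bk^\vee)$ by the complementarity of $A(\bk)$ and $A(\bk^\vee)$, followed by the two finite binomial expansions of $(1-u_k)^{N-1}$ and $\frac{1-(1-u_k)^N}{u_k}$. Your additional care about the measure-theoretic reading of the reflection and the matching of order conditions only makes explicit what the paper leaves implicit.
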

\begin{proof}
Under the change of variables $t_i\mapsto 1-t_i$, 
$\omega_0(t_i)$ and $\omega_1(t_i)$ are interchanged and 
$\Delta(\bk)$ maps onto $\Delta(\bk^\vee)$. 
Hence the identities follow from 
\begin{align*}
(1-t_k)^{N-1}&=\sum_{n=1}^N(-t_k)^{n-1}\binom{N-1}{n-1}, \\
\frac{1-(1-t_k)^N}{1-(1-t_k)}&=\sum_{n=1}^N(-t_k)^{n-1}\binom{N}{n}. \qedhere
\end{align*}
\end{proof}

\subsection{Newton series (definition)}\label{subsec:Newton}
Following Kawashima \cite{K1}, we define the Kawashima function 
by a Newton series: 
\begin{defn}\label{defn:F_k}
For a nonempty index $\bk$, we define the \emph{Kawashima function} 
$F_\bk(z)$ as 
\begin{equation}\label{eq:F_k defn}
F_\bk(z)=\sum_{n=1}^\infty(-1)^{n-1}s^\star(\bk^\vee,n)\binom{z}{n}. 
\end{equation}
As a convention, we put $F_\varnothing(z)=1$. 
\end{defn}

From the Newton series formula for the digamma function 
\eqref{eq:psi Newton}, we see that $F_1(z)=\psi(z+1)+\gamma$. 
Hence the Kawashima function may be viewed as a generalization 
of (a slight modification of) the digamma function. 

With regard to the convergence of the series \eqref{eq:F_k defn}, 
Kawashima proved: 

\begin{prop}[{\cite[Proposition 5.1]{K1}}]
Let $\bk$ be a nonempty index and $\rho$ the last component of 
the Hoffman dual of $\bk$. 
Then the Newton series $F_\bk(z)$ has the abscissa of convergence $-\rho$, 
i.e., converges uniformly on compact sets in the half plane $\re(z)>-\rho$, 
and diverges on $\re(z)<-\rho$. 
\end{prop}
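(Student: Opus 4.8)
The plan is to read off the convergence behaviour from the size of the general term $u_n(z):=(-1)^{n-1}s^\star(\bk^\vee,n)\binom{z}{n}$ by comparing the Newton series with an ordinary Dirichlet series. Two asymptotic inputs are needed: the growth rate of the coefficients $s^\star(\bk^\vee,n)$, and the classical asymptotics of $\binom{z}{n}$ as $n\to\infty$.

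For the coefficients, write $\bk^\vee=(l_1,\ldots,l_q)$, so that $\rho=l_q$. Isolating the largest summation variable $m_q=n$ gives
\[
s^\star(\bk^\vee,n)=\frac{1}{n^\rho}\,S^\star\bigl((l_1,\ldots,l_{q-1}),n\bigr)
\]
(the inner factor being $1$ when $q=1$), and this factor satisfies $1\le S^\star((l_1,\ldots,l_{q-1}),n)\le C(\log n)^{q-1}$: the lower bound comes from the single term $m_1=\cdots=m_{q-1}=1$, and the upper bound by dropping the ordering constraint and majorising each variable by $\sum_{m\le n}m^{-1}$. Hence $s^\star(\bk^\vee,n)=n^{-\rho+o(1)}$, with the explicit two-sided bound $n^{-\rho}\le s^\star(\bk^\vee,n)\le C(\log n)^{q-1}n^{-\rho}$. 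In particular the Dirichlet series $D(w):=\sum_{n\ge1}s^\star(\bk^\vee,n)\,n^{-w}$ has abscissa of \emph{absolute} convergence exactly $1-\rho$.

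Next I would record, from the reflection formula and Stirling's expansion, the estimate
\[
\binom{z}{n}=\frac{(-1)^{n-1}}{\Gamma(-z)}\,n^{-z-1}\bigl(1+O_K(1/n)\bigr)\qquad(n\to\infty),
\]
valid uniformly for $z$ in any compact set $K$; note that $1/\Gamma(-z)$ is entire, hence bounded on $K$. Combined with the coefficient bound this gives $\lvert u_n(z)\rvert\le M_K(\log n)^{q-1}n^{-\rho-\re z-1}$ on $K$. If $K\subset\{\re z>-\rho\}$, then $\re z\ge-\rho+\delta$ there, so $\lvert u_n(z)\rvert\le M_K'(\log n)^{q-1}n^{-1-\delta}$, and the Weierstrass $M$-test yields absolute, locally uniform convergence on $\{\re z>-\rho\}$. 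This already shows the abscissa is $\le-\rho$.

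The delicate half is divergence for $\re z<-\rho$, and this is the step I expect to be the main obstacle: in the strip $-\rho-1<\re z<-\rho$ the terms still tend to $0$ and, for non-real $z$, oscillate, so divergence is invisible term by term. Here I would exploit nonnegativity of the coefficients. Writing $u_n(z)=\Gamma(-z)^{-1}s^\star(\bk^\vee,n)n^{-z-1}(1+O(1/n))$, the error series $\sum s^\star(\bk^\vee,n)\,O(n^{-z-2})$ converges absolutely for $\re z>-\rho-1$, so in that range $\sum u_n(z)$ converges if and only if $D(z+1)$ does. Since $s^\star(\bk^\vee,n)\ge0$, Landau's theorem forces the abscissa of convergence of $D$ to coincide with its abscissa of absolute convergence $1-\rho$; thus $D(z+1)$ diverges for $\re z<-\rho$, and so does $\sum u_n(z)$ throughout $-\rho-1<\re z<-\rho$. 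For $\re z\le-\rho-1$ the lower bound $\lvert u_n(z)\rvert\gtrsim n^{-\rho-\re z-1}$ (using $s^\star(\bk^\vee,n)\ge n^{-\rho}$ and the finiteness of $\Gamma(-z)$ in this region) shows the terms do not tend to $0$, so the series diverges trivially. Together these cover all of $\{\re z<-\rho\}$, completing the identification of the abscissa of convergence as $-\rho$.
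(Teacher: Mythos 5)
Your proof is correct, but note that the paper itself offers no proof to compare against: it quotes this proposition directly from Kawashima \cite[Proposition 5.1]{K1} (adding only the remark that the case $q=0$ in the formula for $\rho$ seems to be missed there). The original argument in \cite{K1} leans on the classical N\"orlund theory of Newton series, which supplies both the half-plane-of-convergence phenomenon and a $\limsup$-type formula for the abscissa in terms of the coefficients, combined with essentially the same coefficient estimate $n^{-\rho}\leq s^\star(\bk^\vee,n)\leq (1+\log n)^{q-1}n^{-\rho}$ that you obtain by isolating the largest summation variable. Your route replaces that black box by a direct reduction to a nonnegative Dirichlet series via the binomial asymptotics, and this buys self-containedness: the convergence half is a clean $M$-test, and your divergence argument works at every complex $z$ with $\re z<-\rho$ (not merely real $z$), so you never need N\"orlund's theorem that convergence of a Newton series at one point propagates to the half-plane on its right. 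Two small points. First, the correct sign in the asymptotic is $\binom{z}{n}=\frac{(-1)^{n}}{\Gamma(-z)}n^{-z-1}\bigl(1+O_K(1/n)\bigr)$, not $(-1)^{n-1}$; this is immaterial, since it multiplies every term $u_n(z)$ by the same factor $-1$ and leaves all convergence equivalences (and the positivity of the coefficients of $D$) untouched. Second, invoking ``Landau's theorem'' is heavier than necessary: for a Dirichlet series with nonnegative coefficients, convergence at $w_0$ gives convergence at every real $\sigma>\re w_0$, where nonnegativity makes the convergence absolute, whence $\sigma_c=\sigma_a$ elementarily; the deeper singularity statement of Landau is not needed. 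Your handling of $1/\Gamma(-z)$ is sound: its zeros are the nonnegative integers, which lie in the convergence half-plane, so its nonvanishing on $\{\re z<-\rho\}$ (where $\re(-z)>\rho\geq 1$) justifies both the strip equivalence with $D(z+1)$ and the term-size lower bound for $\re z\leq-\rho-1$.
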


In particular, all Kawashima functions are defined and 
holomorphic on $\re(z)>-1$. Hence, at least, it makes sense to consider 
the Taylor expansion at $z=0$. 
We present explicit results in \S\ref{subsec:Taylor}. 

\begin{rem}
If we write $\bk=(k_1,\ldots,k_q,\underbrace{1,\ldots,1}_{l})$, 
where $k_q>1$ or $q=0$, then $\rho$ is given by 
\[\rho=\begin{cases}
l+1 & \text{if $q\geq 1$},\\
l & \text{if $q=0$}. 
\end{cases}\]
In \cite[Proposition 5.1]{K1}, the latter case seems to be missed. 
\end{rem}

\subsection{Interpolation property}\label{subsec:interpolation}
\begin{prop}\label{prop:F_k interpolation}
For any integer $N\geq 0$, we have 
\begin{equation}\label{eq:F_k interpolation}
F_\bk(N)=S^\star(\bk,N). 
\end{equation}
Conversely, if a Newton series $f(z)=\sum_{n=0}^\infty a_n\binom{z}{n}$ 
satisfies $f(N)=S^\star(\bk,N)$ for all $N\geq 0$, 
then $f(z)$ coincides with $F_\bk(z)$ coefficientwise 
(i.e., $a_n=(-1)^{n-1}s^\star(\bk^\vee,n)$ hold for all $n$). 
\end{prop}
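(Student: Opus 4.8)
The direct assertion \eqref{eq:F_k interpolation} follows immediately from the dual formula \eqref{eq:S_k dual}, so I would dispose of it first. Substituting $z=N$ into the defining Newton series \eqref{eq:F_k defn} and using $\binom{N}{n}=0$ for $n>N$ (with $N$ a nonnegative integer), the infinite sum truncates at $n=N$, and the resulting finite sum is precisely the right-hand side of \eqref{eq:S_k dual}, namely $S^\star(\bk,N)$.

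For the converse, the plan is to recover the Newton coefficients of an arbitrary Newton series from its values at the nonnegative integers, using the forward difference operator $\Delta f(z):=f(z+1)-f(z)$. The starting point is the identity $\Delta\binom{z}{n}=\binom{z}{n-1}$ for $n\geq 1$, which shows that $\Delta$ acts as a downward shift on the Newton basis. Iterating and evaluating at $z=0$, where $\binom{0}{m}=\delta_{0m}$, I would obtain $(\Delta^n f)(0)=a_n$; that is, each coefficient is a fixed finite-difference combination of the integer values of $f$.

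The point that makes this rigorous without any analytic subtlety is that for a nonnegative integer $N$ the value $f(N)=\sum_{m=0}^N a_m\binom{N}{m}$ is a \emph{finite} sum, so the finite-difference formula $(\Delta^n f)(0)=\sum_{j=0}^n(-1)^{n-j}\binom{n}{j}f(j)$ is an exact identity, with no interchange of limits to justify. Expanding $f(j)$ and swapping the two finite sums, the coefficient of $a_m$ becomes $\sum_{j=m}^n(-1)^{n-j}\binom{n}{j}\binom{j}{m}$, which I would evaluate to $\delta_{mn}$ via the subset-of-a-subset identity $\binom{n}{j}\binom{j}{m}=\binom{n}{m}\binom{n-m}{j-m}$ together with the vanishing alternating sum $\sum_i(-1)^i\binom{n-m}{i}=0$ for $n>m$. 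This confirms $(\Delta^n f)(0)=a_n$.

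To finish, I would note that both $f$ and $F_\bk$ take the common values $S^\star(\bk,N)$ at every nonnegative integer $N$ — for $f$ by hypothesis, for $F_\bk$ by the direct part already proved. Since $a_n=(\Delta^n f)(0)$ depends only on $f(0),\ldots,f(n)$, the coefficients of $f$ and of $F_\bk$ are computed by the identical formula, forcing $a_n=(-1)^{n-1}s^\star(\bk^\vee,n)$ for all $n$. I expect no real obstacle here: the entire argument is elementary combinatorics of binomial coefficients, and the only step meriting attention is confirming that the finite-difference extraction is exact, which is guaranteed precisely because each $f(N)$ is a finite sum over the Newton basis.
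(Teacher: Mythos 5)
Your proposal is correct and takes essentially the same approach as the paper: the direct part is the truncation of the Newton series at $z=N$ via \eqref{eq:S_k dual}, and the converse rests on the fact that the values $f(0),f(1),\ldots$ determine the coefficients through the finite triangular system $f(N)=\sum_{n=0}^{N}a_n\binom{N}{n}$. Your explicit finite-difference inversion $(\Delta^n f)(0)=a_n$ is simply a closed-form version of the inductive determination that the paper invokes in one line, so no further justification is needed.
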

\begin{proof}
The identity \eqref{eq:F_k interpolation} follows from \eqref{eq:S_k dual}. 
For the second assertion, note the fact that the identity 
\[f(N)=\sum_{n=0}^N a_n\binom{N}{n}\]
determines inductively the coefficients $a_n$ by the values $f(N)$. 
\end{proof}

This characterization of the Kawashima function 
by its values at non-negative integers plays an essential role 
in Kawashima's proofs of the fraction series expansion 
(Theorem \ref{thm:F_k fraction}) 
and the harmonic relation (Theorem \ref{thm:F_k harmonic}). 

\subsection{Integral representation}\label{subsec:int}
\begin{prop}\label{prop:F_k int}
With the same notation as in Theorem \ref{thm:S_k int}, 
we have 
\begin{equation}\label{eq:F_k int}
F_\bk(z)=\int_{\Delta(\bk)}\omega_{\delta(1)}(t_1)
\cdots\omega_{\delta(k-1)}(t_{k-1})\frac{1-t_k^z}{1-t_k}dt_k. 
\end{equation}
\end{prop}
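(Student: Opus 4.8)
The plan is to derive \eqref{eq:F_k int} directly from the definition \eqref{eq:F_k defn} by inserting the integral representation of $s^\star(\bk^\vee,n)$ and carrying out the sum over $n$ inside the integral. Writing $k=\wt{\bk}=\wt{\bk^\vee}$ and applying \eqref{eq:s_k int} to the dual index $\bk^\vee$, I would first rewrite
\[
F_\bk(z)=\sum_{n=1}^\infty(-1)^{n-1}\binom{z}{n}\int_{\Delta(\bk^\vee)}\omega_{\delta^\vee(1)}(t_1)\cdots\omega_{\delta^\vee(k-1)}(t_{k-1})\,t_k^{n-1}\,dt_k,
\]
where $\delta^\vee$ is the sign function attached to $A(\bk^\vee)$. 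After interchanging the sum and the integral, the innermost variable $t_k$ then collects the factor $\sum_{n\geq1}(-1)^{n-1}\binom{z}{n}t_k^{n-1}$, which I would put in closed form.

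For the closed form, note that $\sum_{n\geq1}\binom{z}{n}(-t)^n=(1-t)^z-1$, whence
\[
\sum_{n=1}^\infty(-1)^{n-1}\binom{z}{n}t^{n-1}=\frac{1-(1-t)^z}{t}.
\]
This turns the expression into an integral over $\Delta(\bk^\vee)$ with innermost factor $\frac{1-(1-t_k)^z}{t_k}$. I would finish by the change of variables $t_i\mapsto 1-t_i$, exactly as in the proof of Hoffman's duality above: it interchanges $\omega_0$ and $\omega_1$ (so $\delta^\vee$ becomes $\delta$, since $A(\bk)$ and $A(\bk^\vee)$ are complementary), carries $\Delta(\bk^\vee)$ onto $\Delta(\bk)$, and sends $\frac{1-(1-t_k)^z}{t_k}$ to $\frac{1-t_k^z}{1-t_k}$, which is precisely \eqref{eq:F_k int}.

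The one step needing genuine care, and the step I expect to be the main obstacle, is the interchange of summation and integration, since the Newton series converges only conditionally near its abscissa of convergence. To handle it cleanly I would first prove the identity for $\re(z)$ large, where the factor $\frac{1-(1-t_k)^z}{t_k}$ stays bounded on $(0,1)$ and the partial sums admit an integrable dominating function, and then extend to the full domain $\re(z)>-1$ by analytic continuation, both sides being holomorphic there.

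Alternatively, I could sidestep the interchange by using the interpolation characterization. Denoting by $G(z)$ the right-hand side of \eqref{eq:F_k int}, the expansion $\frac{1-t_k^z}{1-t_k}=\sum_{m\geq1}(-1)^{m-1}\binom{z}{m}(1-t_k)^{m-1}$ exhibits $G$ as a Newton series, while $G(N)=S^\star(\bk,N)=F_\bk(N)$ for every integer $N\geq0$ by \eqref{eq:S_k int} and \eqref{eq:F_k interpolation}. The converse part of Proposition \ref{prop:F_k interpolation} would then force $G=F_\bk$ coefficientwise, again reducing the whole matter to a justified term-by-term integration but now packaged through the uniqueness of Newton coefficients.
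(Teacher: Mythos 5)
Your proposal is correct and is essentially the paper's own proof run in the opposite direction: the paper starts from the integral, makes the same substitution $t_i\mapsto 1-t_i$ (carrying $\Delta(\bk)$ onto $\Delta(\bk^\vee)$), and uses the same identity $\frac{1-(1-t_k)^z}{1-(1-t_k)}=\sum_{n=1}^\infty(-t_k)^{n-1}\binom{z}{n}$ to recover the Newton series \eqref{eq:F_k defn} via \eqref{eq:s_k int}. Your added justification of the sum--integral interchange (and the alternative via Proposition \ref{prop:F_k interpolation}) merely fills in details the paper's terse proof leaves implicit.
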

\begin{proof}
Just as in the proof of \eqref{eq:S_k dual}, 
make the change of variables $t_i\mapsto 1-t_i$ 
and use the identity 
\[\frac{1-(1-t_k)^z}{1-(1-t_k)}
=\sum_{n=1}^\infty (-t_k)^{n-1}\binom{z}{n}. \qedhere\]
\end{proof}

\begin{ex}
Let us describe the relation between the polygamma function 
$\psi^{(m)}(z)=\bigl(\frac{d}{dz}\bigr)^m\psi(z)$ and 
the Kawashima function. For $m=0$, we already know that 
$F_1(z)=\psi^{(0)}(z+1)+\gamma$. 
For $m>0$, we have 
\[\psi^{(m)}(z+1)
=\biggl(\frac{d}{dz}\biggr)^m\int_0^1\frac{1-t^z}{1-t}dt
=-\int_0^1(\log t)^m\frac{t^z}{1-t}dt.\]
Since 
\[(\log t)^m=\biggl(-\int_t^1\frac{du}{u}\biggr)^m
=(-1)^mm!\int_{1>u_1>\cdots>u_m>t}\frac{du_1}{u_1}\cdots\frac{du_m}{u_m}, \]
we have 
\begin{align*}
\psi^{(m)}(z+1)&=(-1)^{m-1}m!\int_{1>u_1>\cdots>u_m>t>0}
\frac{du_1}{u_1}\cdots\frac{du_m}{u_m}\frac{t^z}{1-t}dt\\
&=(-1)^mm!\bigl(F_{m+1}(z)-\zeta(m+1)\bigr). 
\end{align*}
Here we use the integral representation 
\eqref{eq:F_k int} for $F_{m+1}(z)$ together with 
the iterated integral expression 
\begin{equation}\label{eq:zeta int}
\zeta(m+1)=\int_{1>u_1>\cdots>u_m>t>0}
\frac{du_1}{u_1}\cdots\frac{du_m}{u_m}\frac{1}{1-t}dt. 
\end{equation}
Hence we get 
\begin{equation}\label{eq:polygamma}
F_{m+1}(z)=\frac{(-1)^m}{m!}\psi^{(m)}(z+1)+\zeta(m+1)
\end{equation}
for integers $m>0$. Note that this also holds for $m=0$ if 
we interpret $\zeta(1)$ as $\gamma$. 
\end{ex}

\subsection{Fraction series}\label{subsec:fraction}
Here we give two generalizations of \eqref{eq:psi fraction}. 
The first is an inductive formula: 

\begin{prop}\label{prop:F_k inductive}
Let $\bk=(k_1,\ldots,k_r)$ be a nonempty index and 
write $\bk_-=(k_1,\ldots,k_{r-1})$ 
(when $r=1$, $\bk_-$ is the empty index $\varnothing$). 
Then we have 
\begin{equation}\label{eq:F_k inductive}
F_\bk(z)=\sum_{n=1}^\infty
\biggl(s^\star(\bk,n)-\frac{F_{\bk_-}(n+z)}{(n+z)^{k_r}}\biggr). 
\end{equation}
\end{prop}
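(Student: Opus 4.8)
The plan is to derive \eqref{eq:F_k inductive} from the integral representation \eqref{eq:F_k int} by expanding the kernel $\frac{1-t_k^z}{1-t_k}$ and integrating term by term. First I would record the elementary expansion
\[
\frac{1-t_k^z}{1-t_k}=\sum_{n=0}^\infty\bigl(t_k^n-t_k^{n+z}\bigr),\qquad t_k\in(0,1),
\]
which follows from $\frac{1}{1-t_k}=\sum_{n\ge 0}t_k^n$. Substituting into \eqref{eq:F_k int}, the whole matter reduces to evaluating the shifted moment
\[
I(s):=\int_{\Delta(\bk)}\omega_{\delta(1)}(t_1)\cdots\omega_{\delta(k-1)}(t_{k-1})\,t_k^{s}\,dt_k
\]
for $\re(s)>-1$, since the $n$-th term then contributes $I(n)-I(n+z)$; note \eqref{eq:s_k int} already identifies $I(n)=s^\star(\bk,n+1)$.

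The key step, and the real content of the proposition, is the identity $I(s)=\dfrac{F_{\bk_-}(s+1)}{(s+1)^{k_r}}$. To prove it I would integrate out the last block of variables $t_{a+1},\ldots,t_k$, where $a=\wt{\bk_-}=k-k_r$ is the largest element of $A(\bk)$. Since $a+1,\ldots,k-1\notin A(\bk)$, the relevant constraints are $t_a<t_{a+1}>t_{a+2}>\cdots>t_k$, with forms $\omega_1(t_a)$ followed by $\omega_0(t_{a+1}),\ldots,\omega_0(t_{k-1})$. Integrating $t_k,t_{k-1},\ldots,t_{a+1}$ in turn, each $\omega_0$ raising the power of $(s+1)$ in the denominator, gives the inner integral $\frac{1-t_a^{s+1}}{(s+1)^{k_r}}$, so that
\[
I(s)=\frac{1}{(s+1)^{k_r}}\int\omega_{\delta(1)}(t_1)\cdots\omega_{\delta(a-1)}(t_{a-1})\,\frac{1-t_a^{s+1}}{1-t_a}\,dt_a,
\]
the integration being over the region cut out by the constraints at positions $1,\ldots,a-1$. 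Because $A(\bk_-)=A(\bk)\setminus\{a\}$, the restriction of $\delta$ to $\{1,\ldots,a-1\}$ is exactly the sign pattern of $\bk_-$, so this region is $\Delta(\bk_-)$ and the integrand is precisely that of \eqref{eq:F_k int} for $F_{\bk_-}(s+1)$, yielding the identity. The case $r=1$, with $\bk_-=\varnothing$, is checked directly: the integral is over a single simplex and equals $(s+1)^{-k_r}=F_\varnothing(s+1)/(s+1)^{k_r}$.

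Assembling the pieces, term-by-term integration gives
\[
F_\bk(z)=\sum_{n=0}^\infty\bigl(I(n)-I(n+z)\bigr)=\sum_{n=0}^\infty\Bigl(s^\star(\bk,n+1)-\frac{F_{\bk_-}(n+z+1)}{(n+z+1)^{k_r}}\Bigr),
\]
and reindexing by $m=n+1$ produces \eqref{eq:F_k inductive}. I expect the main technical obstacle to be not the moment computation but the justification of the interchange of summation and integration: one must show that the tail $\int_{\Delta(\bk)}\omega_{\delta(1)}\cdots\omega_{\delta(k-1)}\frac{t_k^N(1-t_k^z)}{1-t_k}\,dt_k$ tends to $0$. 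This tail coincides with the tail of the series on the right, whose summand equals $h(n)-h(n+z)$ with $h(w)=F_{\bk_-}(w)/w^{k_r}=O\bigl(w^{-k_r}(\log w)^{r-1}\bigr)$, hence is $O\bigl(n^{-k_r-1}(\log n)^{r-1}\bigr)$; thus the series converges absolutely and locally uniformly for $\re(z)>-1$, legitimizing the interchange and showing the right-hand side is holomorphic there. As a consistency check, evaluating at $z=N\in\mathbb{Z}_{\ge 0}$ and using $s^\star(\bk,n)=F_{\bk_-}(n)/n^{k_r}$ together with the interpolation property \eqref{eq:F_k interpolation}, the right-hand side telescopes to $\sum_{n=1}^N s^\star(\bk,n)=S^\star(\bk,N)=F_\bk(N)$.
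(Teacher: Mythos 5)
Your proof is correct and follows essentially the same route as the paper's: both expand the kernel $\frac{1-t_k^z}{1-t_k}$ in the integral representation \eqref{eq:F_k int} as $\sum_n (t_k^{n-1}-t_k^{n+z-1})$ and integrate out the last $k_r$ variables so that the remaining integral over $\Delta(\bk_-)$ is recognized as $F_{\bk_-}$ at shifted arguments, with the moment identity $I(n)=s^\star(\bk,n+1)$ (equivalently $F_{\bk_-}(n)/n^{k_r}=s^\star(\bk,n)$) supplying the harmonic-sum term. Your repackaging via the function $I(s)$, the separate check of the $r=1$ case, and the explicit justification of the sum--integral interchange are minor refinements of the computation the paper performs directly.
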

\begin{proof}
Put $k=\wt{\bk}$ and $k'=\wt{\bk_-}$. 
Then the tail of the multiple integral \eqref{eq:F_k int} is written as 
\begin{align*}
&\frac{dt_{k'}}{1-t_{k'}}
\int_{t_{k'}}^1\frac{dt_{k'+1}}{t_{k'+1}}
\int_0^{t_{k'+1}}\frac{dt_{k'+2}}{t_{k'+2}}\\
&\hspace{100pt}\cdots
\int_0^{t_{k-2}}\frac{dt_{k-1}}{t_{k-1}}
\int_0^{t_{k-1}}\frac{1-t_k^z}{1-t_k}dt_k\\
&=\sum_{n=1}^\infty
\frac{dt_{k'}}{1-t_{k'}}
\int_{t_{k'}}^1\frac{dt_{k'+1}}{t_{k'+1}}
\int_0^{t_{k'+1}}\frac{dt_{k'+2}}{t_{k'+2}}\\
&\hspace{100pt}\cdots
\int_0^{t_{k-2}}\frac{dt_{k-1}}{t_{k-1}}
\int_0^{t_{k-1}}(t_k^{n-1}-t_k^{n+z-1})dt_k\\
&=\sum_{n=1}^\infty\frac{dt_{k'}}{1-t_{k'}}
\biggl(\frac{1-t_{k'}^n}{n^{k_r}}-\frac{1-t_{k'}^{n+z}}{(n+z)^{k_r}}\biggr). 
\end{align*}
Hence the whole integral is equal to 
\begin{align*}
&\sum_{n=1}^\infty \int_{\Delta(\bk_-)}
\prod_{j=1}^{k'-1}\omega_{\delta(j)}(t_j)
\biggl(\frac{1}{n^{k_r}}\frac{1-t_{k'}^n}{1-t_{k'}}
-\frac{1}{(n+z)^{k_r}}\frac{1-t_{k'}^{n+z}}{1-t_{k'}}\biggr)dt_{k'}\\
&=\sum_{n=1}^\infty\biggl(\frac{F_{\bk_-}(n)}{n^{k_r}}
-\frac{F_{\bk_-}(n+z)}{(n+z)^{k_r}}\biggr)
=\sum_{n=1}^\infty
\biggl(s^\star(\bk,n)-\frac{F_{\bk_-}(n+z)}{(n+z)^{k_r}}\biggr). \qedhere
\end{align*}
\end{proof}

For $\bk=(1)$, the above formula \eqref{eq:F_k inductive} is the same as 
the formula \eqref{eq:psi fraction} for the digamma function. 
See Example \ref{ex:fraction r=1} below. 

\begin{cor}\label{cor:F_k diff}
With the same notation as in Proposition \ref{prop:F_k inductive}, 
Kawashima functions satisfy the difference equation 
\begin{equation}\label{eq:F_k diff}
F_\bk(z)-F_\bk(z-1)=\frac{F_{\bk_-}(z)}{z^{k_r}}. 
\end{equation}
\end{cor}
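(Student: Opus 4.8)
The plan is to obtain the difference equation directly from the inductive formula \eqref{eq:F_k inductive} just proved. I would apply Proposition \ref{prop:F_k inductive} twice, once at $z$ and once at $z-1$, and subtract. The terms $s^\star(\bk,n)$ do not depend on the argument, so they cancel in pairs, and I am left with
\[
F_\bk(z)-F_\bk(z-1)
=\sum_{n=1}^\infty\biggl(\frac{F_{\bk_-}(n+z-1)}{(n+z-1)^{k_r}}
-\frac{F_{\bk_-}(n+z)}{(n+z)^{k_r}}\biggr).
\]
Writing $b_m=\dfrac{F_{\bk_-}(m+z)}{(m+z)^{k_r}}$, the $n$-th summand is exactly $b_{n-1}-b_n$, so the series telescopes: its $N$-th partial sum is $b_0-b_N=\dfrac{F_{\bk_-}(z)}{z^{k_r}}-b_N$. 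Thus the identity \eqref{eq:F_k diff} reduces to the single assertion that $b_N\to 0$ as $N\to\infty$.

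The main obstacle, and the only real content beyond bookkeeping, is therefore the vanishing of the tail $b_N$. I expect to dispatch it using only facts already available. On the one hand, the convergence of \eqref{eq:F_k inductive} (established in Proposition \ref{prop:F_k inductive}) forces its general term to tend to $0$, i.e.\ $s^\star(\bk,n)-\dfrac{F_{\bk_-}(n+z)}{(n+z)^{k_r}}\to 0$. On the other hand, by the interpolation property (Proposition \ref{prop:F_k interpolation}) one has $s^\star(\bk,n)=\dfrac{S^\star(\bk_-,n)}{n^{k_r}}=\dfrac{F_{\bk_-}(n)}{n^{k_r}}$, and this tends to $0$ because $S^\star(\bk_-,n)$ grows only like a power of $\log n$ while $n^{k_r}\geq n$. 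Combining the two gives $b_N\to 0$, and hence the telescoping sum equals $b_0=\dfrac{F_{\bk_-}(z)}{z^{k_r}}$, which is the desired formula.

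A small point I would address at the end is the range of validity: the argument is carried out where both $F_\bk(z)$ and $F_\bk(z-1)$ are defined, that is on the half plane $\re(z)>1-\rho$, and the resulting identity \eqref{eq:F_k diff} then holds on all of $\re(z)>-\rho$ by analytic continuation, since both sides are holomorphic there. I anticipate no difficulty here; the substantive step remains the tail estimate, after which the corollary is immediate.
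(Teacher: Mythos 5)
Your proof is correct, and while its telescoping skeleton coincides with the paper's (the paper likewise subtracts the inductive formula \eqref{eq:F_k inductive} at $z$ and $z-1$ and is left with showing $\lim_{n\to\infty}F_{\bk_-}(n+z)/(n+z)^{k_r}=0$), your treatment of that tail is genuinely different. The paper first reduces to real $z$ by analyticity, then uses the integral representation \eqref{eq:F_k int} to see that $F_{\bk_-}$ is monotone increasing on $[0,\infty)$; this reduces the limit along the real ray to the limit along integers, i.e.\ to $s^\star(\bk,N)=F_{\bk_-}(N)/N^{k_r}\to 0$, which is then proved by induction on the depth $r$ via the Ces\`aro-type identity $s^\star\bigl((\underbrace{1,\ldots,1}_r),N\bigr)=\frac{1}{N}\sum_{n=1}^N s^\star\bigl((\underbrace{1,\ldots,1}_{r-1}),n\bigr)$. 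You avoid monotonicity altogether: since the series \eqref{eq:F_k inductive} converges at the point $z$ itself, its general term tends to zero, so $b_n$ and $s^\star(\bk,n)$ have the same limit, and you dispose of $s^\star(\bk,n)=S^\star(\bk_-,n)/n^{k_r}$ by the direct bound $S^\star(\bk_-,n)\leq(1+\log n)^{r-1}$ (note that the interpolation property \eqref{eq:F_k interpolation} is not actually needed here, though citing it does no harm, and it also lets your argument work verbatim for complex $z$, with no reduction to the real axis). Your route is arguably cleaner --- a one-line logarithmic estimate in place of an induction, and no appeal to positivity of the integrand; what the paper's monotonicity argument buys in exchange is the slightly stronger fact that $F_{\bk_-}(x)/x^{k_r}\to 0$ along the whole real ray, which your convergence trick sidesteps rather than proves.

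One small correction to your closing remark: extending \eqref{eq:F_k diff} ``to all of $\re(z)>-\rho$ by analytic continuation, since both sides are holomorphic there'' is mildly circular, because $F_\bk(z-1)$ is a priori defined by the Newton series only for $\re(z)>1-\rho$. On that half plane your argument is complete and nothing more is needed; beyond it, the identity is not continued but rather \emph{used} to continue $F_\bk$ --- as the paper notes immediately after the corollary, it is precisely the equation \eqref{eq:F_k diff} that furnishes the meromorphic continuation of $F_\bk$ to the whole complex plane.
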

\begin{proof}
Since both sides are analytic, we may assume that $z$ is real. 
From Proposition \ref{prop:F_k inductive}, we obtain 
\[F_\bk(z)-F_\bk(z-1)=\frac{F_{\bk_-}(z)}{z^{k_r}}
-\lim_{n\to\infty}\frac{F_{\bk_-}(n+z)}{(n+z)^{k_r}}, \]
hence the proposition follows from that 
\[\frac{F_{\bk_-}(z)}{z^{k_r}}\to 0 \quad (z\to\infty). \]
Moreover, from Proposition \ref{prop:F_k int}, 
we see that $F_\bk(z)$ is monotone increasing for $z\geq 0$. 
Therefore, it suffices to show that 
\[\frac{F_{\bk_-}(N)}{N^{k_r}}=s^\star(\bk,N)\to 0\quad (N\to\infty). \]
Now we have an estimate 
\[0\leq s^\star(\bk,N)\leq s^\star\bigl((\underbrace{1,\ldots,1}_r),N\bigr)
=\frac{1}{N}\sum_{n=1}^N 
s^\star\bigl((\underbrace{1,\ldots,1}_{r-1}),n\bigr), \]
and the statement is proven by induction on $r$. 
\end{proof}

Note that, from \eqref{eq:F_k diff}, it follows that 
$F_\bk(z)$ is meromorphically continued to the whole complex plane. 

\bigskip 

The second generalization of \eqref{eq:psi fraction}, 
which seems more nontrivial than the first, 
was given by Kawashima \cite{K2}. 
To present it, we make some definitions. 

For a nonempty index $\bk=(k_1,\ldots,k_r)$, 
write $\rev{\bk}=(k_r,\ldots,k_1)$. 

\begin{defn}
For integers $r>0$ and $n_1,\ldots,n_r>0$, we put 
\begin{align*}
P_r(n_1,\ldots,n_r;z)&=\frac{1}{n_1\cdots n_{r-1}(n_r+z)}, \\
\tilde{P}_r(n_1,\ldots,n_r;z)
&=\frac{1}{n_1\cdots n_{r-1}}\biggl(\frac{1}{n_r}-\frac{1}{n_r+z}\biggr). 
\end{align*}
Then, for a nonempty index $\bk=(k_1,\ldots,k_r)$ of weight $k=\wt{\bk}$, 
we define 
\begin{equation}\label{eq:G_k}
\begin{split}
G_\bk(z)=
\sum &P_{k_1}(n_1,\ldots,n_{k_1};z)P_{k_2}(n_{k_1+1},\ldots,n_{k_1+k_2};z)
\cdots \\
&\cdot P_{k_{r-1}}(n_{k_1+\cdots+k_{r-2}+1},\ldots,n_{k_1+\cdots+k_{r-1}};z)\\
&\cdot \tilde{P}_{k_r}(n_{k_1+\cdots+k_{r-1}+1},\ldots,n_k;z), 
\end{split}
\end{equation}
where the sum is taken over all sequences of positive integers 
$n_1,\ldots,n_k$ satisfying 
\begin{equation}\label{eq:summation}
\begin{cases}
n_j<n_{j+1} & \text{if $j\notin A(\bk)$}, \\
n_j\leq n_{j+1} & \text{if $j\in A(\bk)$} 
\end{cases}
\end{equation}
(recall that $A(\bk)$ denotes the set 
$\{k_1,k_1+k_2,\ldots,k_1+\cdots+k_{r-1}\}$).  
\end{defn}

For example, 
\[G_{1,3}(z)=\sum_{0<n_1\leq n_2<n_3<n_4}
\frac{1}{(n_1+z)n_2n_3}\biggl(\frac{1}{n_4}-\frac{1}{n_4+z}\biggr). \]
By the following theorem, this is equal to $F_{1,1,2}(z)$. 

\begin{thm}[{\cite[Theorem 4.4]{K2}}]\label{thm:F_k fraction}
For a nonempty index $\bk$, we have 
\begin{equation}\label{eq:F_k fraction}
F_\bk(z)=G_{\rev{\bk^\vee}}(z). 
\end{equation}
\end{thm}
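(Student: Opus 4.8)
The plan is to exploit the interpolation characterization of Proposition \ref{prop:F_k interpolation}. Since the right-hand side $G_{\rev{\bk^\vee}}(z)$ is holomorphic on the half-plane $\re(z)>-1$ (its defining series \eqref{eq:G_k} converging there), it suffices to establish two things: that $G_{\rev{\bk^\vee}}(z)$ admits a Newton series expansion $\sum_{n\geq 0}a_n\binom{z}{n}$, and that its values at non-negative integers satisfy $G_{\rev{\bk^\vee}}(N)=S^\star(\bk,N)$ for all $N\geq 0$. Granting both, the uniqueness clause of Proposition \ref{prop:F_k interpolation} forces $a_n=(-1)^{n-1}s^\star(\bk^\vee,n)$, and hence $G_{\rev{\bk^\vee}}(z)=F_\bk(z)$. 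The first point reduces the whole problem to a computation of values at integers, where the partial-fraction series collapses to a finite sum.

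The heart of the argument is therefore the evaluation $G_{\rev{\bk^\vee}}(N)=S^\star(\bk,N)$. I would open up the summand of \eqref{eq:G_k} at $z=N$ using the elementary telescoping $\frac{1}{n}-\frac{1}{n+N}=\sum_{j=n}^{n+N-1}\bigl(\frac1j-\frac1{j+1}\bigr)$, equivalently the index-shift identity $\sum_{m\geq 1}\bigl(\frac1m-\frac1{m+N}\bigr)=\sum_{m=1}^{N}\frac1m$ applied block by block. Each factor $\frac{1}{n+N}$ coming from a $P$-block, together with the single telescoping difference in the final $\tilde P$-block, turns the infinite constrained sum over \eqref{eq:summation} into a finite multiple harmonic sum, and the strict/weak inequalities prescribed by $A(\rev{\bk^\vee})$ are carried along so as to reassemble, after the shift, into the ordering $0<m_1\leq\cdots\leq m_r\leq N$ defining $S^\star(\bk,N)$.

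Alternatively — and this is the route I would actually prefer to write out — one can bypass the Newton-series bookkeeping by passing to an integral. Writing $\frac{1}{n+z}=\int_0^1 t^{n+z-1}\,dt$ and $\frac1n=\int_0^1 t^{n-1}\,dt$, I would introduce one variable $t_i$ per summation index $n_i$ and sum the resulting geometric series subject to \eqref{eq:summation}. The constraints $n_j<n_{j+1}$ and $n_j\leq n_{j+1}$ become, after summation, an ordering of the $t_i$ together with the forms $\omega_0$ or $\omega_1$, while the last block produces the factor $\frac{1-t_k^z}{1-t_k}$ exactly as in \eqref{eq:F_k int}. The change of variables $t_i\mapsto 1-t_i$ — which interchanges $\omega_0$ and $\omega_1$ and reverses the order of the variables, precisely as in the proof of \eqref{eq:S_k dual} — then converts the region attached to $\rev{\bk^\vee}$ into $\Delta(\bk)$, yielding the integral representation of $F_\bk(z)$.

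I expect the main obstacle to be the purely combinatorial matching: keeping track of how the block boundaries $A(\rev{\bk^\vee})$, the placement of the shift $+z$ at the end of each block, and the lone telescoping difference in the last block interact with the strict versus weak inequalities, and checking that after the reversal $\rev{\,\cdot\,}$ and the Hoffman dualization $(\cdot)^\vee$ they reproduce exactly the inequality pattern of $\Delta(\bk)$ (respectively of $S^\star(\bk,N)$). The analytic inputs — convergence of \eqref{eq:G_k} on $\re(z)>-1$ and the legitimacy of interchanging summation and integration — are routine; it is the bijective reconciliation of the two constraint systems, where the reversal and the dual must be made to cancel correctly, that I would treat most carefully.
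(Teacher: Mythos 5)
Your first route is, in outline, the correct one: the paper gives no proof of Theorem \ref{thm:F_k fraction} (it cites \cite[Theorem 4.4]{K2}), but it states in \S\ref{subsec:interpolation} that the interpolation characterization of Proposition \ref{prop:F_k interpolation} ``plays an essential role in Kawashima's proofs of the fraction series expansion,'' which is exactly your plan. However, you understate both of its genuinely hard points. First, Proposition \ref{prop:F_k interpolation} gives uniqueness only \emph{among Newton series}, so the values $G_{\rev{\bk^\vee}}(N)=S^\star(\bk,N)$ prove nothing until you show that $G_{\rev{\bk^\vee}}$ is represented by some convergent Newton series --- recall that $\sin\pi z$ vanishes at every non-negative integer. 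This expandability is a N\"orlund-type theorem (holomorphy plus exponential type $<\log 2$ in a half-plane) and requires growth estimates on the multiple series \eqref{eq:G_k}; calling the analytic inputs ``routine'' is where the real work hides. Second, the evaluation at integers is not a block-by-block telescoping: already for $\bk=(2)$ one has $G_{1,1}(N)=\sum_{0<n_1\le n_2}\frac{1}{n_1+N}\bigl(\frac{1}{n_2}-\frac{1}{n_2+N}\bigr)$, and after the telescoping in $n_2$ one still faces an infinite sum in $n_1$ which only yields $\sum_{m=1}^N m^{-2}$ after partial fractions and a further nontrivial rearrangement (one is led to $\sum_{i=1}^N\frac{1}{i}\sum_{m=N-i+1}^N\frac{1}{m}=\sum_{m=1}^N\frac{1}{m^2}$, true but not a collapse). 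This combinatorial work is essentially the content of \cite{K2}.

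The route you say you would actually write out --- route 2 --- breaks down at its central step. In \eqref{eq:F_k int} the variable $z$ occurs in the single factor $\frac{1-t_k^z}{1-t_k}$, whereas in $G_{\rev{\bk^\vee}}(z)$ it occurs in \emph{every} $P$-block denominator $n_j+z$; writing $\frac{1}{n_j+z}=\int_0^1 t_j^{\,n_j+z-1}dt_j$ plants a factor $t_j^{\,z}$ on each such variable, and these factors survive the geometric summation. Concretely, for $\bk=(2)$ your procedure gives $G_{1,1}(z)=\int_0^1\!\int_0^1\frac{u^z(1-v^z)}{(1-v)(1-uv)}\,du\,dv$, which is not of the shape \eqref{eq:F_k int} (namely $F_2(z)=\int_{1>t_1>t_2>0}\frac{dt_1}{t_1}\,\frac{1-t_2^z}{1-t_2}\,dt_2$), and the substitution $t\mapsto 1-t$ turns $u^z$ into $(1-u)^z$ rather than into a $z$-free form $\omega_0$ or $\omega_1$. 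The mechanism that makes the substitution work in Proposition \ref{prop:F_k int} is the binomial identity $\frac{1-(1-t)^z}{t}=\sum_{n\ge1}(-t)^{n-1}\binom{z}{n}$, i.e.\ the $z$-dependence entering through $\binom{z}{n}$ --- and that is available starting from the Newton series \eqref{eq:F_k defn}, not from \eqref{eq:G_k}. The paper itself warns of this in Example \ref{ex:fraction r=1}: for $k>1$ it ``seems not easy to see'' that the two expressions agree. So the two integrands are genuinely different and \eqref{eq:F_k fraction} is an identity of functions, not of integral representations; you should abandon route 2 and complete route 1 by supplying the Newton-expandability estimate and the full interpolation computation.
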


\begin{ex}\label{ex:fraction r=1}
Let us consider an index $\bk=(k)$ of length $1$. 
By \eqref{eq:F_k inductive} and \eqref{eq:F_k fraction}, we have 
\begin{align*}
F_k(z)&=\sum_{n=1}^\infty\biggl(\frac{1}{n^k}-\frac{1}{(n+z)^k}\biggr)\\
=G_{\underbrace{\scriptstyle 1,\ldots,1}_k}(z)
&=\sum_{0<n_1\leq\cdots\leq n_k}
\frac{1}{(n_1+z)\cdots (n_{k-1}+z)}
\biggl(\frac{1}{n_k}-\frac{1}{n_k+z}\biggr). 
\end{align*}
In particular, when $k=1$, 
both expressions coincide with the formula 
\eqref{eq:psi fraction} for the digamma function. 
For $k>1$, in contrast, it seems not easy to see that 
these two expressions are equal. 
\end{ex}

\section{Kawashima's relation of multiple zeta values}
In this section, we discuss the connections of 
Kawashima functions with multiple zeta values. 

\subsection{Notation related to multiple zeta values}
A nonempty index $\bk=(k_1,\ldots,k_r)$ is said \emph{admissible} 
if $k_r>1$. For such $\bk$, we define the multiple zeta value (MZV) 
and the multiple zeta-star value (MZSV) by 
\begin{align}
\zeta(\bk)
&=\sum_{0<m_1<\cdots<m_r}\frac{1}{m_1^{k_1}\cdots m_r^{k_r}}
=\sum_{n=1}^\infty s(\bk,n), \\
\zeta^\star(\bk)
&=\sum_{0<m_1\leq\cdots\leq m_r}\frac{1}{m_1^{k_1}\cdots m_r^{k_r}}
=\sum_{n=1}^\infty s^\star(\bk,n). 
\end{align}
We also regard the empty index $\varnothing$ as admissible, 
and put $\zeta(\varnothing)=\zeta^\star(\varnothing)=1$. 

Let $\frH^1=\bigoplus_{\bk}\Q\cdot\bk$ be the $\Q$-vector space 
freely generated by all indices $\bk$, 
and $\frH^0$ the subspace generated by the admissible indices. 
There are two $\Q$-bilinear products $*$ and $\bast$, 
called the \emph{harmonic products}, 
for which $\varnothing$ is the unit element and which satisfies 
\begin{align*}
\bk*\bl&=(\bk_-*\bl,k_r)+(\bk*\bl_-,l_s)+(\bk_-*\bl_-,k_r+l_s),\\
\bk\bast\bl&=(\bk_-\bast\bl,k_r)+(\bk\bast\bl_-,l_s)
-(\bk_-\bast\bl_-,k_r+l_s),
\end{align*}
where $\bk=(k_1,\ldots,k_r)$ and $\bl=(l_1,\ldots,l_s)$ are 
any nonempty indices and 
$\bk_-=(k_1,\ldots,k_{r-1})$, $\bl_-=(l_1,\ldots,l_{s-1})$. 
In the following, we also need another product 
\[\bk\circledast\bl=\bigl(\bk_-*\bl_-,k_r+l_s\bigr), \]
defined on the subspace of $\frH^1$ generated by all nonempty indices. 

We extend the map $\bk\mapsto s(\bk,z)$ to a linear map on $\frH^1$. That is, 
for $v=\sum_{\bk}a_\bk\cdot\bk\in\frH^1$, we put 
\[s(v,z)=\sum_\bk a_\bk\, s(\bk,z). \]
The same rule also applies to $S(\bk,N)$, $F_\bk(z)$, $\zeta(\bk)$ and so on. 
Then one can see that 
\begin{align*}
s(v,N)s(w,N)&=s(v\circledast w,N), \\
S(v,N)S(w,N)&=S(v*w,N), & 
S^\star(v,N)S^\star(w,N)&=S^\star(v\bast w,N), \\ 
\zeta(v)\zeta(w)&=\zeta(v*w), & 
\zeta^\star(v)\zeta^\star(w)&=\zeta^\star(v\bast w). 
\end{align*}
Moreover, we define a linear operator $v\mapsto v^\star$ on $\frH^1$ by 
\[\begin{split}
&(k_1,\ldots,k_r)^\star\\
&=\sum_{0<j_1<\cdots<j_q=r}
\bigl(k_1+\cdots+k_{j_1},k_{j_1+1}+\cdots+k_{j_2},
\ldots,k_{j_{q-1}+1}+\cdots+k_{j_q}\bigr),\end{split}\]
so that $s^\star(v,N)=s(v^\star,N)$, 
$S^\star(v,N)=S(v^\star,N)$ and $\zeta^\star(v)=\zeta(v^\star)$. 

\subsection{Taylor series}\label{subsec:Taylor}
We give three ways to express the Taylor coefficients of $F_\bk(z)$ 
at $z=0$ in terms of MZVs. The first is to substitute 
\begin{align*}
\binom{z}{n}&=\frac{z(z-1)(z-2)\cdots(z-n+1)}{n!}\\
&=\biggl(\frac{z}{1}-1\biggr)\biggl(\frac{z}{2}-1\biggr)
\cdots\biggl(\frac{z}{n-1}-1\biggr)\frac{z}{n}\\
&=\sum_{m=1}^n\sum_{0<a_1<\cdots<a_m=n}\frac{(-1)^{n-m}z^m}{a_1\cdots a_m}\\
&=\sum_{m=1}^n(-1)^{n-m}s\bigl((\underbrace{1,\ldots,1}_m),n\bigr)z^m
\end{align*}
into the definition \eqref{eq:F_k defn} of $F_\bk(z)$. 
The result is: 

\begin{prop}[{\cite[Proposition 5.2]{K1}}]\label{prop:F_k Taylor1}
For any nonempty index $\bk$, the Taylor expansion of 
$F_\bk(z)$ at $z=0$ is given by 
\begin{equation}\label{eq:F_k Taylor1}
F_\bk(z)=\sum_{m=1}^\infty (-1)^{m-1}
\zeta\bigl((\underbrace{1,\ldots,1}_{m})\circledast(\bk^\vee)^\star\bigr)
z^m. 
\end{equation}
\end{prop}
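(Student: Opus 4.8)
The plan is to take the expansion of $\binom{z}{n}$ displayed just above as a polynomial in $z$, insert it into the defining Newton series \eqref{eq:F_k defn}, and then read off the coefficient of $z^m$ by reorganizing the result as a power series around $z=0$. Concretely, the $N$-th partial sum of \eqref{eq:F_k defn} is the polynomial $P_N(z)=\sum_{n=1}^N(-1)^{n-1}s^\star(\bk^\vee,n)\binom{z}{n}$, and since the coefficient of $z^m$ in $\binom{z}{n}$ is $(-1)^{n-m}s\bigl((\underbrace{1,\ldots,1}_m),n\bigr)$ (vanishing when $n<m$), the coefficient of $z^m$ in $P_N(z)$ is
\[
(-1)^{m-1}\sum_{n=m}^{N}s^\star(\bk^\vee,n)\,s\bigl((\underbrace{1,\ldots,1}_m),n\bigr),
\]
where the sign has collapsed via $(-1)^{n-1}(-1)^{n-m}=(-1)^{m-1}$.

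Next I would let $N\to\infty$ in this coefficient. Using $s^\star(\bk^\vee,n)=s\bigl((\bk^\vee)^\star,n\bigr)$ together with the product rule $s(v,n)\,s(w,n)=s(v\circledast w,n)$, each summand becomes $s\bigl((\underbrace{1,\ldots,1}_m)\circledast(\bk^\vee)^\star,n\bigr)$; since this vanishes for $n<m$, the sum may be taken over all $n\ge 1$, giving
\[
(-1)^{m-1}\sum_{n=1}^{\infty}s\bigl((\underbrace{1,\ldots,1}_m)\circledast(\bk^\vee)^\star,n\bigr)
=(-1)^{m-1}\zeta\bigl((\underbrace{1,\ldots,1}_m)\circledast(\bk^\vee)^\star\bigr),
\]
which is exactly the coefficient in \eqref{eq:F_k Taylor1}. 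Here $\circledast$ always produces an admissible index, since its last component is $1+(\text{last component of }(\bk^\vee)^\star)\ge 2$, so the infinite sum over $n$ really does converge to an honest MZV.

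The one genuinely non-formal step---the part I expect to require care---is the passage $N\to\infty$, i.e.\ justifying that the coefficient of $z^m$ in $F_\bk$ equals the limit of the coefficients of $z^m$ in the polynomials $P_N$. Rather than invoking Fubini on a doubly-indexed array, I would appeal to the theorem of Weierstrass: Kawashima's convergence result guarantees $P_N\to F_\bk$ uniformly on compact subsets of $\re(z)>-\rho$, and since $\rho\ge 1$ every closed disc about $z=0$ of radius $<\rho$ is such a compact set. Uniform convergence of holomorphic functions forces each Taylor coefficient of $P_N$ to converge to the corresponding coefficient of $F_\bk$ (via the Cauchy integral formula), and the coefficient of $z^m$ in $P_N$ is precisely the $N$-th partial sum of the series in $n$ written above. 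Hence that series converges and its value is the desired Taylor coefficient, which completes the argument.
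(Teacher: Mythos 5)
Your proposal is correct and follows exactly the route the paper indicates: substituting the expansion $\binom{z}{n}=\sum_{m=1}^n(-1)^{n-m}s\bigl((\underbrace{1,\ldots,1}_m),n\bigr)z^m$ into the Newton series \eqref{eq:F_k defn} and collecting the coefficient of $z^m$ via $s^\star(v,n)=s(v^\star,n)$ and $s(v,n)s(w,n)=s(v\circledast w,n)$. Your appeal to Weierstrass' theorem (uniform convergence of the partial sums on compact subsets of $\re(z)>-\rho$ with $\rho\geq 1$) supplies the justification for the coefficientwise limit that the paper leaves implicit, so the argument is complete and faithful to the original.
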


The second method is to differentiate repeatedly the integral 
representation \eqref{eq:F_k int} as in the proof of \eqref{eq:polygamma}. 
By this method, we obtain the following formula. 

\begin{prop}\label{prop:F_k Taylor2}
With the same notation as in Theorem \ref{thm:S_k int}, 
we put 
\[A_m(\bk)=\int_{\Delta(\bk,\underbrace{\scriptstyle 1,\ldots,1}_m)}
\omega_{\delta(1)}(t_1)\cdots\omega_{\delta(k-1)}(t_{k-1})
\frac{dt_k}{1-t_k}\frac{dt_{k+1}}{t_{k+1}}\cdots\frac{dt_{k+m}}{t_{k+m}}. \]
Then we have 
\begin{equation}\label{eq:F_k Taylor2}
F_\bk(z)=\sum_{m=1}^\infty(-1)^{m-1}A_m(\bk)z^m. 
\end{equation}
\end{prop}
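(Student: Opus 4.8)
The plan is to differentiate the integral representation \eqref{eq:F_k int} repeatedly at $z=0$, in exactly the manner used to compute $\psi^{(m)}(z+1)$ in the Example following Proposition \ref{prop:F_k int}. Among the factors in \eqref{eq:F_k int}, only $\frac{1-t_k^z}{1-t_k}$ depends on $z$, and for $m\geq 1$ one has
\[\biggl(\frac{d}{dz}\biggr)^m\frac{1-t_k^z}{1-t_k}\bigg|_{z=0}=-\frac{(\log t_k)^m}{1-t_k}.\]
Granting that the differentiation may be performed under the integral sign, this yields
\[F_\bk^{(m)}(0)=-\int_{\Delta(\bk)}\omega_{\delta(1)}(t_1)\cdots\omega_{\delta(k-1)}(t_{k-1})\frac{(\log t_k)^m}{1-t_k}\,dt_k,\]
where $k=\wt{\bk}$.

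Next I would rewrite $(\log t_k)^m$ as an iterated integral, using the same identity as in that Example, namely
\[(\log t_k)^m=(-1)^m m!\int_{1>u_1>\cdots>u_m>t_k}\frac{du_1}{u_1}\cdots\frac{du_m}{u_m}.\]
Relabelling $u_i=t_{k+m+1-i}$ turns the chain $t_k<u_m<\cdots<u_1<1$ into $t_k<t_{k+1}<\cdots<t_{k+m}<1$, leaving the product of forms unchanged. Since $-(-1)^m=(-1)^{m-1}$, this gives
\[F_\bk^{(m)}(0)=(-1)^{m-1}m!\int_{D}\omega_{\delta(1)}(t_1)\cdots\omega_{\delta(k-1)}(t_{k-1})\frac{dt_k}{1-t_k}\frac{dt_{k+1}}{t_{k+1}}\cdots\frac{dt_{k+m}}{t_{k+m}},\]
where $D\subseteq(0,1)^{k+m}$ is the domain obtained from $\Delta(\bk)$ by adjoining the inequalities $t_k<t_{k+1}<\cdots<t_{k+m}$.

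The key bookkeeping step is to identify $D$ with $\Delta(\bk,\underbrace{1,\ldots,1}_m)$. Writing out the partial sums of the concatenated index shows
\[A\bigl(\bk,\underbrace{1,\ldots,1}_m\bigr)=A(\bk)\amalg\{k,k+1,\ldots,k+m-1\},\]
the added elements being precisely the positions that encode the new inequalities $t_k<t_{k+1}<\cdots<t_{k+m}$; moreover, since these added elements are all $\geq k$, the conditions indexed by $j=1,\ldots,k-1$ (which govern $t_1,\ldots,t_k$) coincide with those defining $\Delta(\bk)$. Hence $D=\Delta(\bk,\underbrace{1,\ldots,1}_m)$, and as $k\in A(\bk,\underbrace{1,\ldots,1}_m)$ forces $\omega_{\delta(k)}=\frac{dt_k}{1-t_k}$, the integral above is exactly $A_m(\bk)$. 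Therefore $F_\bk^{(m)}(0)=(-1)^{m-1}m!\,A_m(\bk)$, and dividing by $m!$ produces the claimed Taylor coefficients; the constant term vanishes since $F_\bk(0)=S^\star(\bk,0)=0$.

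The main obstacle I anticipate is the analytic justification of differentiating under the integral sign, since the integrand has a $\frac{1}{1-t_k}$ singularity as $t_k\to1$ and logarithmic singularities as coordinates tend to $0$. I would control these by observing that $(\log t_k)^m$ vanishes to order $m$ at $t_k=1$, which cancels the pole of $\frac{1}{1-t_k}$, so that each $A_m(\bk)$ is an absolutely convergent iterated integral of the same shape as those defining MZVs. A dominated-convergence estimate on the difference quotients, uniform on a neighbourhood of $z=0$ inside the half-plane $\re(z)>-1$ where $F_\bk$ is holomorphic, then legitimizes the term-by-term differentiation and completes the argument.
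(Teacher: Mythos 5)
Your proposal is correct and is exactly the argument the paper intends: the paper gives no separate proof of Proposition \ref{prop:F_k Taylor2}, saying only that one differentiates the integral representation \eqref{eq:F_k int} repeatedly as in the derivation of \eqref{eq:polygamma}, which is precisely what you carry out. Your sign bookkeeping, the identification $A\bigl(\bk,\underbrace{1,\ldots,1}_m\bigr)=A(\bk)\amalg\{k,k+1,\ldots,k+m-1\}$ matching the domain $\Delta(\bk,\underbrace{1,\ldots,1}_m)$, and the convergence remarks are all sound, and indeed supply details the paper leaves implicit.
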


The third method is based on Theorem \ref{thm:F_k fraction} 
and a computation of the derivatives of $G_\bk(z)$ at $z=0$. 

\begin{defn}
Let $\bk=(k_1,\ldots,k_r)$ be a nonempty index of weight $k=\wt{\bk}$. 
For an index $\bl=(l_1,\ldots,l_k)$ of length $k$, define 
\begin{equation}\label{eq:zeta_k}
\zeta_\bk(\bl)=\sum\frac{1}{n_1^{l_1}\cdots n_k^{l_k}}
\end{equation}
where the sum is taken just as in the definition of $G_\bk(z)$, i.e., 
over all sequences of positive integers $n_1,\ldots,n_k$ satisfying 
\eqref{eq:summation}. 
\end{defn}

\begin{prop}[{\cite[Proposition 5.2]{K2}}]\label{prop:C_m(k)}
For a nonempty index $\bk=(k_1,\ldots,k_r)$ and an integer $m\geq 1$, put 
\[C_m(\bk)=
\sum_{\substack{l_1,\ldots,l_{r-1}\geq 0,l_r\geq 1\\ l_1+\cdots+l_r=m}}
\zeta_\bk(\underbrace{1,\ldots,1}_{k_1-1},l_1+1,\ldots,
\underbrace{1,\ldots,1}_{k_r-1},l_r+1). \]
Then we have 
\begin{equation}\label{eq:C_m(k)}
\frac{G_\bk^{(m)}(0)}{m!}=(-1)^{m-1}C_m(\bk). 
\end{equation}
\end{prop}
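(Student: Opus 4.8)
The plan is to read off the Taylor coefficients of $G_\bk(z)$ directly from its defining series \eqref{eq:G_k}, exploiting the fact that $z$ enters a single summand only through the factors sitting at the block-ending positions $p_i=k_1+\cdots+k_i$ ($1\le i\le r$), namely $\frac{1}{n_{p_i}+z}$ for $i<r$ and the difference $\frac{1}{n_k}-\frac{1}{n_k+z}$ for $i=r$ (here $k=\wt{\bk}$, and the $p_i$ are exactly $A(\bk)$ together with $k$). Every remaining factor $1/n_j$ is constant in $z$. First I would expand each of the $z$-dependent factors as a geometric series about $z=0$: for $i<r$,
\[
\frac{1}{n_{p_i}+z}=\sum_{l_i\ge 0}\frac{(-1)^{l_i}}{n_{p_i}^{\,l_i+1}}\,z^{l_i},
\]
while the difference in $\tilde{P}_{k_r}$ telescopes away its constant term,
\[
\frac{1}{n_k}-\frac{1}{n_k+z}=\sum_{l_r\ge 1}\frac{(-1)^{l_r-1}}{n_k^{\,l_r+1}}\,z^{l_r}.
\]

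Next I would multiply these $r$ series together and extract the coefficient of $z^m$. A monomial $z^m$ arises precisely from tuples $(l_1,\ldots,l_r)$ with $l_1,\ldots,l_{r-1}\ge 0$, $l_r\ge 1$ and $l_1+\cdots+l_r=m$, which is exactly the summation range in $C_m(\bk)$. For such a tuple the exponent attached to $n_j$ becomes $1$ at each non-block-ending position and $l_i+1$ at $j=p_i$; reading these off block by block yields exactly the argument $(\underbrace{1,\ldots,1}_{k_1-1},l_1+1,\ldots,\underbrace{1,\ldots,1}_{k_r-1},l_r+1)$ of $\zeta_\bk$. The total sign is $\prod_{i<r}(-1)^{l_i}\cdot(-1)^{l_r-1}=(-1)^{m-1}$, independent of the individual $l_i$. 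Since summing over the sequences $n_1,\ldots,n_k$ constrained by \eqref{eq:summation} is precisely the operation defining $\zeta_\bk$, the coefficient of $z^m$ collapses to $(-1)^{m-1}C_m(\bk)$, which is \eqref{eq:C_m(k)}.

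The only genuine obstacle is justifying the interchange of the outer multiple sum over $n_1,\ldots,n_k$ with the geometric expansions in $z$. For $|z|\le\rho<1$ each geometric series converges absolutely (every $n_{p_i}\ge 1$), and a single summand is bounded in absolute value by the corresponding term with $\frac{1}{n_{p_i}+z}$ replaced by $\frac{1}{n_{p_i}-\rho}$ and $\frac{1}{n_k}-\frac{1}{n_k+z}$ by $\frac{\rho}{n_k(n_k-\rho)}$; this dominating series is a convergent multiple series of the same admissibility type, so Fubini legitimizes the rearrangement and identifies the power series so obtained with the analytic Taylor expansion of $G_\bk(z)$ at $0$ (which exists since $G_\bk(z)=F_\bk(z)$ is holomorphic on $\re(z)>-1$ by Theorem \ref{thm:F_k fraction}). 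Convergence of each coefficient series $\zeta_\bk(\ldots)$ is in turn guaranteed because the largest variable $n_k$ carries exponent $l_r+1\ge 2$, the usual admissibility condition. Once this bookkeeping is in place the identity is a matter of matching exponents and signs, and I expect no further difficulty.
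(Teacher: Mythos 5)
Your proof is correct and takes essentially the approach the paper intends: the paper itself gives no proof, deferring to \cite[Proposition 5.2]{K2} and describing the method as ``a computation of the derivatives of $G_\bk(z)$ at $z=0$,'' which is exactly what your geometric-series expansions of $\frac{1}{n_{p_i}+z}$ and $\frac{1}{n_k}-\frac{1}{n_k+z}$ carry out, with the sign bookkeeping, the exponent matching against $\zeta_\bk$, and the domination argument for the rearrangement all in order. One minor remark: the appeal to Theorem \ref{thm:F_k fraction} for analyticity of $G_\bk(z)$ is superfluous (and would be mildly circular, as it invokes the deeper result of \cite{K2}), since your Fubini argument already identifies $G_\bk(z)$ with a convergent power series on $|z|<1$, giving analyticity at $0$ directly.
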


\begin{cor}\label{cor:F_k Taylor3}
For a nonempty index $\bk$, we have 
\begin{equation}\label{eq:F_k Taylor3}
F_\bk(z)=\sum_{m=1}^\infty (-1)^{m-1}C_m(\rev{\bk^\vee})z^m. 
\end{equation}
\end{cor}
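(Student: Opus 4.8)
The plan is to combine the two immediately preceding results directly. First I would invoke Theorem \ref{thm:F_k fraction} to rewrite the Kawashima function as a fraction series, obtaining the identity $F_\bk(z)=G_{\rev{\bk^\vee}}(z)$. This reduces the computation of the Taylor coefficients of $F_\bk$ to that of $G_{\rev{\bk^\vee}}$, which is exactly what Proposition \ref{prop:C_m(k)} provides.

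Next I would check that the Taylor expansion at the origin is legitimate and has no constant term. Since the abscissa of convergence of $F_\bk$ is $-\rho$ with $\rho\geq 1$, the function $F_\bk$ is holomorphic on $\re(z)>-1$ and in particular admits a convergent Taylor expansion at $z=0$. Evaluating the interpolation property (Proposition \ref{prop:F_k interpolation}) at $N=0$ gives $F_\bk(0)=S^\star(\bk,0)=0$, because the defining sum of $S^\star(\bk,0)$ is empty. Hence the expansion $F_\bk(z)=G_{\rev{\bk^\vee}}(z)=\sum_{m\geq 0}\bigl(G_{\rev{\bk^\vee}}^{(m)}(0)/m!\bigr)z^m$ has vanishing term for $m=0$.

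Finally I would apply Proposition \ref{prop:C_m(k)} to the index $\rev{\bk^\vee}$: for each $m\geq 1$ it yields $G_{\rev{\bk^\vee}}^{(m)}(0)/m!=(-1)^{m-1}C_m(\rev{\bk^\vee})$. Substituting these coefficients into the Taylor series and discarding the (already vanishing) constant term produces the asserted identity \eqref{eq:F_k Taylor3}.

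I anticipate no genuine obstacle here: the corollary is essentially bookkeeping once Theorem \ref{thm:F_k fraction} and Proposition \ref{prop:C_m(k)} are in hand, and no new analytic input is required. The only point that might deserve a moment's attention is the legitimacy of reading off the Taylor coefficients of $G_{\rev{\bk^\vee}}$ at the origin, i.e.\ the term-by-term differentiability of the multiple series defining $G$; but this analytic justification is already subsumed in the proof of Proposition \ref{prop:C_m(k)}, so it may be taken for granted. The substance of the result thus lies entirely in the two cited statements, and what remains is their assembly together with the vanishing of the constant term.
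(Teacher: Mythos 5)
Your proposal is correct and matches the paper's (implicit) argument exactly: the corollary is stated as the immediate combination of Theorem \ref{thm:F_k fraction} (giving $F_\bk(z)=G_{\rev{\bk^\vee}}(z)$) with Proposition \ref{prop:C_m(k)} applied to the index $\rev{\bk^\vee}$. Your extra verification of the vanishing constant term via $F_\bk(0)=S^\star(\bk,0)=0$ is sound bookkeeping (one could equally note that the factor $\tilde{P}_{k_r}$ in \eqref{eq:G_k} vanishes at $z=0$), and nothing further is needed.
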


By comparing the above three expressions of the Taylor expansion 
of $F_\bk(z)$, we get 
\begin{equation}\label{eq:coeff}
\zeta\bigl((\underbrace{1,\ldots,1}_{m})\circledast(\bk^\vee)^\star\bigr)
=A_m(\bk)=C_m(\rev{\bk^\vee}). 
\end{equation}
Since each of these expressions can be written 
as a sum of finitely many MZVs, 
this identity gives linear relations among MZVs. 
The relation 
\begin{equation}\label{eq:coeffA}
\zeta\bigl((\underbrace{1,\ldots,1}_{m})\circledast(\bk^\vee)^\star\bigr)
=A_m(\bk)
\end{equation}
appears in \cite{KY} with a different proof (see \S\ref{subsec:Rem} below), 
while 
\begin{equation}\label{eq:coeffC}
\zeta\bigl((\underbrace{1,\ldots,1}_{m})\circledast\bk^\star\bigr)
=C_m(\rev{\bk})
\end{equation}
($\bk^\vee$ is replaced by $\bk$) is given in \cite[Proposition 5.3]{K2}. 
Kawashima also proved the equivalence of 
\eqref{eq:coeffC} for $m=1$ and the duality relation. 

\begin{ex}
Let us consider the case of $\bk=(1)$. 
Then the formula \eqref{eq:F_k Taylor1} says that 
\[F_1(z)=\sum_{m=1}^\infty
(-1)^{m-1}\zeta(\underbrace{1,\ldots,1}_{m-1},2)z^m. \]
On the other hand, \eqref{eq:F_k Taylor2} and \eqref{eq:F_k Taylor3} give 
\[F_1(z)=\sum_{m=1}^\infty(-1)^{m-1}\zeta(m+1)z^m, \]
which is exactly the classical formula \eqref{eq:psi Taylor} 
in the introduction. 
Hence we obtain 
$\zeta(\underbrace{1,\ldots,1}_{m-1},2)=\zeta(m+1)$, 
which is a special case of the duality. 
\end{ex}

\subsection{Harmonic relation}\label{subsec:harmonic}
\begin{thm}[{\cite[Theorem 5.3]{K1}}]\label{thm:F_k harmonic}
For any indices $\bk$ and $\bl$, we have 
\begin{equation}\label{eq:F_k harmonic}
F_\bk(z)F_\bl(z)=F_{\bk\bast\bl}(z). 
\end{equation}
\end{thm}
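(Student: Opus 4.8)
The plan is to exploit the characterization of Kawashima functions by their values at non-negative integers, furnished by the uniqueness part of Proposition \ref{prop:F_k interpolation}, together with the harmonic product rule $S^\star(v,N)S^\star(w,N)=S^\star(v\bast w,N)$ established in \S\ref{subsec:Taylor}'s preliminaries. First I would dispose of the degenerate cases: if either argument is the empty index, then $F_\varnothing(z)=1$ and $\varnothing$ is the unit for $\bast$, so \eqref{eq:F_k harmonic} is immediate; hence I may assume that $\bk$ and $\bl$ are both nonempty. The heart of the argument is then to show that the product $F_\bk(z)F_\bl(z)$ is again a Newton series whose values at the non-negative integers are $S^\star(\bk\bast\bl,N)$, after which the uniqueness statement pins down its coefficients.

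The key algebraic input is the classical product formula for binomial coefficients,
\begin{equation*}
\binom{z}{m}\binom{z}{n}
=\sum_{j=0}^{\min(m,n)}\frac{(m+n-j)!}{(m-j)!\,(n-j)!\,j!}\binom{z}{m+n-j},
\end{equation*}
which is a polynomial identity in $z$ (for integer $z$ it counts the pairs consisting of an $m$-subset and an $n$-subset, grouped by the size $j$ of their intersection). Substituting this into the formal product of the two defining series \eqref{eq:F_k defn} for $F_\bk$ and $F_\bl$, and collecting the terms with a fixed top index $k=m+n-j$, rewrites $F_\bk(z)F_\bl(z)$ as a single Newton series $\sum_{k}c_k\binom{z}{k}$ in which each coefficient $c_k$ is a \emph{finite} sum of products $(-1)^{m+n}s^\star(\bk^\vee,m)s^\star(\bl^\vee,n)$ against the displayed multinomials. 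I expect the main obstacle to be justifying this rearrangement: one must verify that the regrouped series converges (uniformly on compact sets in the common half-plane of convergence of $F_\bk$ and $F_\bl$) and genuinely represents the product, so that the hypothesis ``$f(z)$ is a Newton series'' in Proposition \ref{prop:F_k interpolation} is legitimately met. Everything else is bookkeeping.

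Granting that $F_\bk(z)F_\bl(z)=\sum_k c_k\binom{z}{k}$ is a bona fide Newton series, I would evaluate both sides at an arbitrary integer $N\ge 0$. By the interpolation property \eqref{eq:F_k interpolation} and the harmonic product rule,
\begin{equation*}
F_\bk(N)F_\bl(N)=S^\star(\bk,N)S^\star(\bl,N)=S^\star(\bk\bast\bl,N),
\end{equation*}
and likewise $F_{\bk\bast\bl}(N)=S^\star(\bk\bast\bl,N)$. Thus the Newton series $\sum_k c_k\binom{z}{k}$ interpolates the values $S^\star(\bk\bast\bl,N)$, and the uniqueness clause of Proposition \ref{prop:F_k interpolation} forces $c_k=(-1)^{k-1}s^\star((\bk\bast\bl)^\vee,k)$ for every $k$; that is, it coincides coefficientwise with $F_{\bk\bast\bl}(z)$. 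Hence $F_\bk(z)F_\bl(z)=F_{\bk\bast\bl}(z)$ holds first on the common domain of convergence, and then on all of $\mathbb{C}$ by the meromorphic continuation noted after Corollary \ref{cor:F_k diff}.
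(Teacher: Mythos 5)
Your strategy is, in fact, the one the paper points to: the survey does not prove Theorem \ref{thm:F_k harmonic} itself (it is quoted from \cite[Theorem 5.3]{K1}), but it remarks right after Proposition \ref{prop:F_k interpolation} that exactly this interpolation characterization is the essential ingredient in Kawashima's proof of the harmonic relation. Your skeleton is sound as far as it goes: the empty-index cases are correctly dispatched (since $F_\varnothing(z)=1$ and $\varnothing$ is the unit for $\bast$); the product formula $\binom{z}{m}\binom{z}{n}=\sum_{j=0}^{\min(m,n)}\frac{(m+n-j)!}{(m-j)!\,(n-j)!\,j!}\binom{z}{m+n-j}$ is correct and makes each regrouped coefficient $c_k$ a finite sum; and once the product is known to be a \emph{convergent} Newton series on a half-plane containing the non-negative integers, evaluating at $z=N$ (where the series truncates), using \eqref{eq:F_k interpolation} together with $S^\star(v,N)S^\star(w,N)=S^\star(v\bast w,N)$, and invoking the uniqueness clause of Proposition \ref{prop:F_k interpolation} does force $c_k=(-1)^{k-1}s^\star((\bk\bast\bl)^\vee,k)$, hence \eqref{eq:F_k harmonic} on the common half-plane and everywhere by the continuation noted after Corollary \ref{cor:F_k diff}.

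The genuine gap is the step you flag and then grant: that the rearranged series converges and represents $F_\bk(z)F_\bl(z)$. This is not bookkeeping around an essentially finished argument --- it is the analytic heart of the theorem. Without it the proof collapses, because two functions holomorphic on $\re(z)>-1$ can agree at every non-negative integer without being equal (add $\sin(\pi z)$ to one of them), so mere interpolation of the values $S^\star(\bk\bast\bl,N)$ proves nothing; the uniqueness clause of Proposition \ref{prop:F_k interpolation} applies only to a function \emph{already exhibited} as a convergent Newton series, and cannot be brought to bear before convergence of the product series is established. In Kawashima's argument this input is supplied by the classical theory of Newton (factorial) series going back to N{\"o}rlund: the product of two Newton series converging in right half-planes is again represented by a Newton series converging in a suitable right half-plane, which, combined with the convergence result quoted from \cite[Proposition 5.1]{K1} (every $F_\bk$ converges at least on $\re(z)>-1$), guarantees that the resulting half-plane contains all integers $N\geq 0$. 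Alternatively one could attempt a Carlson-type uniqueness theorem with growth estimates on $F_\bk(z)F_\bl(z)-F_{\bk\bast\bl}(z)$, but either way a substantive analytic lemma must be cited or proved; your proposal identifies the obstacle correctly but leaves it open, so as written it is an outline of Kawashima's proof rather than a complete one.
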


By substituting the Taylor expansion \eqref{eq:F_k Taylor1} 
into this relation \eqref{eq:F_k harmonic}, we obtain 
algebraic relations among MZVs.  

\begin{cor}[Kawashima's relation]\label{cor:KawashimaRel}
For any indices $\bk$, $\bl$ and any integer $m\geq 1$, we have 
\begin{equation}\label{eq:KawashimaRel}
\begin{split}
\sum_{\substack{p,q\geq 1\\ p+q=m}}
\zeta\bigl((\underbrace{1,\ldots,1}_p)\circledast(\bk^\vee)^\star\bigr)
&\zeta\bigl((\underbrace{1,\ldots,1}_q)\circledast(\bl^\vee)^\star\bigr)\\
&=-\zeta\bigl((\underbrace{1,\ldots,1}_m)\circledast
((\bk\bast\bl)^\vee)^\star\bigr). 
\end{split}
\end{equation}
\end{cor}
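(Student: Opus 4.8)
The plan is to derive Kawashima's relation \eqref{eq:KawashimaRel} by equating the degree-$m$ Taylor coefficients on both sides of the harmonic relation \eqref{eq:F_k harmonic} from Theorem \ref{thm:F_k harmonic}. Since all three functions $F_\bk(z)$, $F_\bl(z)$ and $F_{\bk\bast\bl}(z)$ are holomorphic near $z=0$, their Taylor expansions are uniquely determined, and the identity $F_\bk(z)F_\bl(z)=F_{\bk\bast\bl}(z)$ forces the coefficients of $z^m$ to agree for every $m\geq 1$.

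First I would record the Taylor expansion of each factor using Proposition \ref{prop:F_k Taylor1}. Writing $c_p(\bk)=\zeta\bigl((\underbrace{1,\ldots,1}_p)\circledast(\bk^\vee)^\star\bigr)$, we have $F_\bk(z)=\sum_{p\geq 1}(-1)^{p-1}c_p(\bk)z^p$ and similarly for $\bl$ and for $\bk\bast\bl$. The left-hand side of \eqref{eq:F_k harmonic} is then the Cauchy product
\[
F_\bk(z)F_\bl(z)=\sum_{m=2}^\infty\Biggl(\sum_{\substack{p,q\geq 1\\ p+q=m}}
(-1)^{p-1}c_p(\bk)\,(-1)^{q-1}c_q(\bl)\Biggr)z^m,
\]
where I note that the lowest-order term is $z^2$ since each factor starts at $z^1$. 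The sign on the coefficient of $z^m$ is $(-1)^{p-1}(-1)^{q-1}=(-1)^{m-2}=(-1)^m$. The right-hand side contributes $(-1)^{m-1}c_m(\bk\bast\bl)$ to the coefficient of $z^m$.

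Next I would equate these coefficients. For each $m\geq 2$ this yields
\[
(-1)^m\sum_{\substack{p,q\geq 1\\ p+q=m}}c_p(\bk)c_q(\bl)
=(-1)^{m-1}c_m(\bk\bast\bl),
\]
and dividing by $(-1)^m$ gives exactly \eqref{eq:KawashimaRel}, the sign $-1$ on the right coming from $(-1)^{m-1}/(-1)^m=-1$. For $m=1$ the left side is an empty sum, consistent with the right side contributing a relation only through $m\geq 2$; in fact since $F_\bk(z)F_\bl(z)$ has no linear term, the coefficient match at $m=1$ forces $c_1(\bk\bast\bl)=0$, which one checks is automatic. This is essentially the whole argument: the substance is carried by Theorem \ref{thm:F_k harmonic} and Proposition \ref{prop:F_k Taylor1}, and the remaining work is bookkeeping of the Cauchy product and tracking the signs. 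The only point requiring care—and the step I would flag as the main obstacle—is the sign computation, since a single miscounted $(-1)$ flips the relation; one must also confirm that the convergence and holomorphy near $z=0$ (guaranteed by the abscissa of convergence being $\leq -1$) genuinely license coefficientwise comparison.
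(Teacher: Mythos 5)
Your proposal is correct and follows exactly the paper's route: substituting the Taylor expansion \eqref{eq:F_k Taylor1} into the harmonic relation \eqref{eq:F_k harmonic} and comparing coefficients of $z^m$, with the sign bookkeeping $(-1)^{p-1}(-1)^{q-1}=(-1)^m$ versus $(-1)^{m-1}$ producing the minus sign, just as the paper intends. Your treatment of $m=1$ (empty sum on the left, so the relation asserts the vanishing of the $z$-coefficient of $F_{\bk\bast\bl}$, which the same coefficient comparison supplies) is also consistent with the statement.
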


\subsection{Remark on the work of Kaneko-Yamamoto}\label{subsec:Rem}
Here we use the notation of \cite{KY}. 
Then the integral $A_m(\bk)$ in Proposition \ref{prop:F_k Taylor2} 
is written as 
\[A_m(\bk)=I\left(\begin{xy}
{(0,5) \ar @{{o}.} (4,1)}, 
{(4,1) \ar @{{o}-} (8,-3)}, 
{(8,-3) \ar @{{*}-} (14,-3)*+[F]{\bk}}, 
{(0,4) \ar @/_1mm/ @{-}_{m} (3,1)} 
\end{xy}\ \right). \]
This means that $A_m(\bk)$ is identical to the value 
denoted by the same symbol in \cite[\S6]{KY} in the case of $Z=\zeta$. 

By the change of variables $t_i\mapsto 1-t_i$ 
(in other words, by the duality relation for MZVs), 
this integral transforms to 
\begin{equation}\label{eq:A_m duality}
A_m(\bk)=I\left(\begin{xy}
{(0,-5) \ar @{{*}.} (4,-1)}, 
{(4,-1) \ar @{{*}-} (8,3)}, 
{(8,3) \ar @{o-} (15,3)*+[F]{\bk^\vee}}, 
{(0,-4) \ar @/^1mm/ @{-}^{m} (3,-1)} 
\end{xy}\ \right)
=\zeta\bigl(\mu\bigl((\underbrace{1,\ldots,1}_m),\bk^\vee\bigr)\bigr). 
\end{equation}
Hence, by combining with \eqref{eq:coeffA}, we obtain 
\begin{equation}\label{eq:IntSer}
\zeta\bigl((\underbrace{1,\ldots,1}_m)\circledast(\bk^\vee)^\star\bigr)
=\zeta\bigl(\mu\bigl((\underbrace{1,\ldots,1}_m),\bk^\vee\bigr)\bigr). 
\end{equation}
This is a special case of the ``integral-series identity'' 
\cite[Theorem 4.1]{KY}. 
In fact, this special case is sufficient to imply 
the general integral-series identity, under the assumption of 
the double shuffle relation (see \cite[Proposition 5.4]{KY}). 

In the proof of \cite[Theorem 6.7]{KY}, 
Kaneko and the author went in reverse, i.e., 
deduced the equality \eqref{eq:coeffA} 
from the integral-series identity \eqref{eq:IntSer} 
and the duality \eqref{eq:A_m duality}. 
Then they proved the relation 
\[\sum_{\substack{p,q\geq 1\\ p+q=m}}
A_p(\bk)A_q(\bl)=-A_m(\bk\bast\bl)\]
corresponding to Kawashima's relation \eqref{eq:KawashimaRel} 
without using Kawashima functions. 
Indeed, since their aim was to show the statement 
``the regularized double shuffle relation and the duality 
imply Kawashima's relation'' in an algebraic setting, 
transcendental object such as $F_\bk(z)$ was not available. 

\section*{Acknowledgments}
The author would like to thank Prof.~Masanobu Kaneko for valuable discussions. 
He also wishes to express his gratitude to the organizers of 
this Lyon Conference 2016 for their invitation and hospitality. 
This work was supported in part by JSPS KAKENHI 
JP26247004, JP16H06336 and JP16K13742, 
as well as JSPS Joint Research Project with CNRS 
``Zeta functions of several variables and applications,'' 
JSPS Core-to-Core program ``Foundation of a Global Research Cooperative 
Center in Mathematics focused on Number Theory and Geometry'' and 
the KiPAS program 2013--2018 of the Faculty of Science and Technology 
at Keio University.

\end{document}